\documentclass[10pt,leqno]{article}
\usepackage[T1]{fontenc}
\usepackage[utf8]{inputenc}
\usepackage[letterpaper,textwidth=5.6in,textheight=9in,centering]{geometry}
\usepackage{amsmath,amssymb,amsthm,mathtools,bm,mathrsfs}
\usepackage{booktabs,array,enumitem,microtype,needspace}
\usepackage{ifpdf,aliascnt}
\usepackage[hidelinks]{hyperref}
\usepackage[capitalize,nameinlink]{cleveref}

\numberwithin{equation}{section}
\newtheorem{theorem}{Theorem}[section]
\theoremstyle{plain}
\newaliascnt{lemma}{theorem}
\newtheorem{lemma}[lemma]{Lemma}
\aliascntresetthe{lemma}
\crefname{lemma}{Lemma}{Lemmas}
\theoremstyle{plain}
\newaliascnt{corollary}{theorem}

\aliascntresetthe{corollary}
\crefname{corollary}{Corollary}{Corollaries}
\theoremstyle{plain}
\newaliascnt{proposition}{theorem}
\newtheorem{proposition}[proposition]{Proposition}
\aliascntresetthe{proposition}
\crefname{proposition}{Proposition}{Propositions}
\theoremstyle{plain}
\newaliascnt{definition}{theorem}

\aliascntresetthe{definition}
\crefname{definition}{Definition}{Definitions}
\theoremstyle{remark}
\newaliascnt{assumption}{theorem}
\newtheorem{assumption}[assumption]{Assumption}
\aliascntresetthe{assumption}
\crefname{assumption}{Assumption}{Assumptions}
\theoremstyle{remark}
\newaliascnt{remark}{theorem}
\newtheorem{remark}[remark]{Remark}
\aliascntresetthe{remark}
\crefname{remark}{Remark}{Remarks}
\theoremstyle{remark}
\newaliascnt{example}{theorem}
\newtheorem{example}[example]{Example}
\aliascntresetthe{example}
\crefname{example}{Example}{Examples}

\newcommand{\email}[1]{\href{mailto:#1}{\nolinkurl{#1}}}
\newenvironment{keywords}{\par\small\noindent\textbf{Keywords.}\ }{\par}
\newenvironment{MSCcodes}{\par\small\noindent\textbf{MSC codes.}\ }{\par\medskip}
\date{}

\newcommand{\Trect}{\mathbb T_\pi\times\mathbb T_1}

\newcommand{\supp}{\operatorname{supp}}
\newcommand{\dist}{\operatorname{dist}}

\title{Fourier Energy Strata for the\\
Cubic Schr\"odinger Equation}
\author{Yulin Xie\thanks{State Key Laboratory of Mathematical Sciences,
Academy of Mathematics and Systems Science, Chinese Academy of Sciences,
Beijing 100190, China; and School of Mathematical Sciences,
University of Chinese Academy of Sciences, Beijing 100049, China.
Corresponding author: Yulin Xie (\email{xieyulin@amss.ac.cn}).}}
\ifpdf
\hypersetup{pdftitle={Fourier Energy Strata for the Cubic Schr\"odinger Equation},
pdfauthor={Yulin Xie},
pdfkeywords={nonlinear Schrodinger equation, partial normal form, Fourier energy strata, small divisors}}
\fi
\begin{document}
\maketitle
\begin{abstract}
We prove a geometric hierarchy of Fourier layer energies for the weakly
nonlinear cubic Schr\"odinger equation on $\mathbb T_\pi\times\mathbb T_1$,
retaining its nontrivial resonant dynamics.
For initial Fourier support in a rectangle, form nested rectangles with
half-widths $(2j+1)$ times the initial ones; layers are the differences
between consecutive rectangles.
On $0\le t\le T\delta^{-1}$, the energy in layer $j$, measured
by its squared $H^1$ norm, is at most $C_N\delta^{2j}$ for
$1\le j\le N$; the energy outside the $N$th rectangle is at most
$C_N\delta^{2N+2}$.  These bounds hold at every fixed depth $N$
under a uniform $H^s$ bound, $s>2$, with constants depending on $N$.
The Sobolev bound holds for every fixed $T$ in the defocusing case.
We also obtain layer estimates for support bounded in one coordinate
and for geometrically decaying initial Fourier tails.
The proof uses a filtered Poincar\'e--Dulac normal form: input layers
select which nonresonant interactions are removed, while the same
layer weights control the interactions left in the equation.
Finite coordinate changes and stability estimates transfer this
hierarchy to the exact solution.
\end{abstract}
\begin{keywords}
nonlinear Schr\"odinger equation, partial normal form, resonance,
Fourier energy strata, small divisors
\end{keywords}
\begin{MSCcodes}35Q55, 35B40, 37K55\end{MSCcodes}

\section{Introduction}\label{sec:introduction}
We prove a geometric hierarchy of Fourier layer energies for the weakly
nonlinear cubic Schr\"odinger equation
\[
 i\partial_tu^\delta=-\Delta u^\delta+
 \lambda\delta|u^\delta|^2u^\delta,
 \qquad u^\delta(0)=u_0,\qquad \lambda\in\{-1,+1\},
\]
on $\mathbb T_\pi\times\mathbb T_1$, where
$\mathbb T_L=\mathbb R/(L\mathbb Z)$ and $0<\delta\le1$.
Cubic Schr\"odinger equations arise as slowly varying envelope models
in nonlinear optics, where the cubic term represents intensity-dependent
refraction \cite{Kelley1965}, and in water-wave theory
\cite{Zakharov1968}.  Their periodic versions provide a setting for
studying how nonlinear interactions redistribute energy among Fourier
modes \cite{FaouGermainHani2016}.  Here $\delta$ measures the interaction
strength, and the fixed rectangular torus has nontrivial cubic resonances.
For initial Fourier support
in a rectangle, enlarge its half-widths by the factors $2j+1$ and define
layers as the differences between consecutive rectangles.
Under a uniform exact-solution $H^s$ bound, $s>2$, we show that, for every
fixed depth $N$ and sufficiently small $\delta$, the squared $H^1$ norm
in layer $j$ is at most $C_N\delta^{2j}$ for $1\le j\le N$, and the
energy outside the $N$th rectangle is at most $C_N\delta^{2N+2}$,
uniformly on $0\le t\le T\delta^{-1}$ for fixed $T>0$.
The resonant dynamics are retained throughout the analysis.

Although the nonlinear coefficient is small, its cumulative effect
over times of order $\delta^{-1}$ can be of order one.
Resonant energy exchange between modes cannot be discarded as a
small correction.  Yet the desired bound must gain a further factor
$\delta^2$ at each successive layer, through any prescribed finite
depth.

The proof uses a \emph{filtered} Poincar\'e--Dulac normal form.
Unlike a full normal-form construction, it removes only nonresonant
interactions selected by their input Fourier layers at each order.
A layered approximate solution and stability estimates connect the
transformed dynamics to the exact solution.

Beyond finite rectangular support, Theorem~\ref{cor:fixed-order} covers
support bounded in only one coordinate and geometrically decaying
initial layer energies.  In the latter case, the Fourier support may
be unbounded in both coordinates.  The regularity requirement is
$s>2$ in all three cases.  For the defocusing equation, the uniform-in-$\delta$
$H^s$ solution bound need not be assumed separately.
The period $\pi$ can be replaced by any fixed $L>0$ with $L^2\notin\mathbb Q$.

The closest predecessor is Staffilani and
Wilson~\cite{StaffilaniWilson2020}, who study the same potential-free
defocusing cubic Schr\"odinger equation on irrational rectangular tori.
For $\lambda=+1$, the rescaling $v=\sqrt{\delta}\,u^\delta$ yields
their small-data formulation, up to the sign convention.
Amplitude and time rescalings relate the small-data long-time problem,
the weakly nonlinear long-time problem, and a highly oscillatory problem
on a fixed time interval; Liu, Wang, and Zhao~\cite{LiuWangZhao2025}
use these relations and a two-scale formulation to construct numerical
integrators on equal-sided periodic domains.
We use the weakly nonlinear formulation to organize the normal form
order by order and track the powers of $\delta$ in the Fourier energy
hierarchy.
Staffilani and Wilson use a partial normal-form reduction and the
coordinatewise structure of cubic resonances to control leakage from
an initial Fourier box;
their proof gives Sobolev comparison estimates cubic in the initial
amplitude.  Their argument retains nontrivial resonances and some
nonresonant interactions involving high modes.

Normal forms \cite{Bambusi2003,BambusiGrebert2006,Grebert2007}
and Galerkin averaging \cite{Bambusi2005} provide a broader framework
for long-time stability of Hamiltonian PDEs.
For NLS with a nonresonant convolution potential, Gauckler and
Lubich~\cite{GaucklerLubich2010} prove near-conservation of Fourier
actions over arbitrary polynomial times, while
Gauckler~\cite[Corollary~2.16]{Gauckler2010} obtains weighted energy
strata in odd-multiple bands, for small, sufficiently regular data.
Gauckler, Hairer, Lubich, and Weiss~\cite{GaucklerHairerLubichWeiss2012}
prove metastable energy strata for weakly nonlinear wave equations
by modulated Fourier expansions, excluding finitely many mass parameters
at fixed depth in a bounded mass interval.
Faou, Gauckler, and Lubich~\cite{FaouGaucklerLubich2013} prove
plane-wave orbital stability over arbitrary polynomial times for small,
regular perturbations and a full-measure set of linearly stable mass
parameters satisfying reduced nonresonance conditions.
Our estimates require neither such a choice of mass nor proximity
to a single plane wave.

For resonant NLS, Colliander et al.~\cite{CKSTT2010} construct large
finite growth of high Sobolev norms on the standard two-dimensional
torus, while Carles and Faou~\cite{CarlesFaou2012} describe high-mode
creation from five-mode data through resonant amplitude equations.
Quasi-resonant Sobolev growth on irrational
tori \cite{GiulianiGuardia2022} cautions against extending this
localization to all longer time scales.

Our contribution is to refine the exterior leakage control of
Staffilani and Wilson into geometric layer estimates through every
fixed finite depth, together with a bound on the total exterior tail.
In contrast to the related nonresonant energy-strata results, we retain
and control nontrivial resonances.
Overall localization or total-energy control does not give the
increasingly sharp leakage bounds required in successive layers.
We overcome this difficulty by filtering nonresonant terms according
to their input Fourier layers at each order: the selected terms admit
controlled coordinate changes, while the same layer weights bound
the retained terms on the layered approximation.

Section~\ref{sec:main-results} introduces the problem and assumptions
and states the main theorem.
Section~\ref{sec:normal-form} constructs the filtered normal form,
starting with one explicit calculation, and Section~\ref{sec:transfer}
proves the estimates for the exact solution.
Section~\ref{sec:long-time} examines the limits of the result through
a quintic example and a stability calculation.
Appendix~\ref{app:envelope} proves the defocusing envelope, and
Appendix~\ref{app:convolution-phase} supplies the convolution and phase estimates.

\Needspace{8\baselineskip}
\section{Problem and main theorem}
\label{sec:main-results}
\subsection{Equation and Fourier notation}
Let $0<\delta\le1$ and fix
$\lambda\in\{-1,+1\}$.  We consider
\begin{equation}\label{eq:nls-ivp}
 i\partial_tu^\delta=-\Delta u^\delta
 +\lambda\delta|u^\delta|^2u^\delta,
 \qquad u^\delta(0)=u_0\in H^s(\Trect),\quad s>2,
\end{equation}
where $u_0$ is independent of $\delta$.  Here
$\mathbb T_\pi=\mathbb R/(\pi\mathbb Z)$ and
$\mathbb T_1=\mathbb R/\mathbb Z$.  Thus the solution and the initial datum
are $\pi$-periodic in $x$ and $1$-periodic in $y$:
\[
u^\delta(t,x+\pi,y)=u^\delta(t,x,y),\qquad
u^\delta(t,x,y+1)=u^\delta(t,x,y).
\]
The sign $+1$ is defocusing and $-1$ is focusing.  With normalized Haar
measure $\pi^{-1}\,dx\,dy$, set
\[
 e_{m,n}=e^{2imx+2\pi iny},\quad
 \omega_{m,n}=4m^2+4\pi^2n^2,\quad
 \langle k\rangle=(1+\omega_k)^{1/2}.
\]
For a Fourier sequence $z$, let
$\|z\|_{h^r}^2=\sum_k\langle k\rangle^{2r}|z_k|^2$.
The Fourier transform identifies $H^r$ with $h^r$.
For $S\subset\mathbb Z^2$, let $P_S$ be the Fourier projection onto
the modes in $S$, defined by
\[
 \widehat{P_Su}_k=\mathbf 1_S(k)\widehat u_k.
\]
We use the same notation on Fourier sequences: $(P_Sz)_k=\mathbf 1_S(k)z_k$.
The main interval is $0\le t\le T\delta^{-1}$, with fixed $T>0$.

The choice of $\pi$ is not essential.  The main theorem and its proof
apply on $\mathbb T_L\times\mathbb T_1$ for every fixed $L>0$ with
$L^2\notin\mathbb Q$, with constants depending on $L$.
The spectrum then becomes $\omega_{m,n}=4\pi^2(m^2/L^2+n^2)$,
and interaction phases have the form
$(4\pi^2/L^2)(A+L^2B)$ with integers $A,B$.
Irrationality gives the coordinatewise resonance identities and the
filtered phase gap used below.
We keep $L=\pi$ in the displayed calculations.

\Needspace{5\baselineskip}
\subsection{Solution envelope and Fourier layers}
\begin{assumption}[Exact-solution envelope]\label{ass:Hs-envelope}
There are $\bar\delta\in(0,1]$ and $R_s<\infty$ such that the
solution of \eqref{eq:nls-ivp} exists on $[0,T\delta^{-1}]$ and
\[
 \sup_{0<\delta\le\bar\delta}
 \sup_{0\le t\le T\delta^{-1}}\|u^\delta(t)\|_{H^s}\le R_s.
\]
\end{assumption}
\begin{lemma}[Defocusing envelope]\label{lem:defocusing-envelope}
If $\lambda=+1$, $s>2$, and $u_0\in H^s$, then
Assumption~\ref{ass:Hs-envelope} holds for every fixed $T>0$ with
$\bar\delta=1$.  The constant may depend on $T,s,u_0$.
\end{lemma}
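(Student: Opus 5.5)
The plan is to combine the conserved energy with a Brezis--Gallouët-type logarithmic inequality, which gives an $H^s$ bound growing at most double-exponentially in time. Since time is at most $T\delta^{-1}$ and the nonlinearity carries a factor $\delta$, the natural variable is the slow time $\tau=\delta t\in[0,T]$. On that range the bound becomes uniform in $\delta$.

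\emph{Step 1: global well-posedness and uniform $H^1$ bound.} For $\lambda=+1$ the mass $\|u^\delta(t)\|_{L^2}^2$ and the energy
\[
E_\delta(u)=\int|\nabla u|^2+\tfrac{\delta}{2}\int|u|^4
\]
are conserved for $H^s$ solutions, $s>2$. Local well-posedness in $H^s$ holds because $H^s$ is an algebra for $s>1$, and persistence of regularity is standard. Both terms of $E_\delta$ are nonnegative, so
\[
\|u^\delta(t)\|_{H^1}^2\le \|u_0\|_{L^2}^2+\|\nabla u_0\|_{L^2}^2+\tfrac12\|u_0\|_{L^4}^4=:K^2,
\]
uniformly in $\delta\le1$ and $t$. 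The $L^4$ norm of $u_0$ is finite by Sobolev embedding.

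\emph{Step 2: $H^s$ energy estimate.} Apply $\langle\nabla\rangle^s$, where $\langle\nabla\rangle$ is the Fourier multiplier by $\langle k\rangle$, and pair with $\langle\nabla\rangle^s u$. The linear part drops out because $-\Delta$ is self-adjoint and commutes with $\langle\nabla\rangle^s$. The fractional Leibniz (Kato--Ponce) estimate on the torus then gives
\[
\frac{d}{dt}\|u\|_{H^s}^2\le C\delta\|u\|_{L^\infty}^2\|u\|_{H^s}^2 .
\]

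\emph{Step 3: logarithmic control of $L^\infty$.} In two dimensions the Brezis--Gallouët inequality on $\mathbb T_\pi\times\mathbb T_1$ reads, for $s>1$,
\[
\|u\|_{L^\infty}\le C_s\|u\|_{H^1}\bigl(1+\log(1+\|u\|_{H^s})\bigr)^{1/2}.
\]
To prove it, split Fourier modes at $\langle k\rangle\le M$. The low part is bounded by $\|u\|_{H^1}(\sum_{\langle k\rangle\le M}\langle k\rangle^{-2})^{1/2}\lesssim\|u\|_{H^1}(\log M)^{1/2}$. The high part is bounded by $\|u\|_{H^s}M^{1-s}$. Then choose $M=(1+\|u\|_{H^s})^{1/(s-1)}$. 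Inserting this together with Step 1 into Step 2, and writing $y=1+\|u\|_{H^s}^2$, gives
\[
y'\le C\delta K^2\,y\log(1+y).
\]
Hence $\log(1+y)$ grows at most exponentially in $\delta t$:
\[
\log(1+y(t))\le \log(1+y(0))\,e^{CK^2\delta t}\le \log(1+y(0))\,e^{CK^2T}\quad\text{for } t\le T\delta^{-1}.
\]
This yields $R_s$ depending only on $T,s,u_0$, for all $\delta\in(0,1]$, so $\bar\delta=1$.

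\emph{Main obstacle.} The delicate points are two rigorous justifications. The first is the Kato--Ponce commutator/product estimate in Step 2 on the torus, which is needed for noninteger $s$. The second is that the blow-up alternative excludes finite-time breakdown in $H^s$, which follows once $\|u\|_{H^s}$ is shown bounded. The key structural point is that the factor $\delta$ in front of the nonlinearity exactly compensates the $\delta^{-1}$ time length. Defocusing is used only in Step 1, where it makes both energy terms nonnegative.
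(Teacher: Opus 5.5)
Your argument follows the paper's proof: mass and defocusing energy conservation give a $\delta$-uniform $H^1$ bound, a Fourier-split Brezis--Gallou\"et estimate combined with the Moser product bound gives $y'\lesssim\delta K^2y\log(1+y)$, and integrating yields a double exponential in $\delta t\le T$. The only difference from the paper is cosmetic: you differentiate $\|u\|_{H^s}^2$, where the paper uses Duhamel and a Dini derivative.

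One correction: your logarithmic inequality is false as stated for small $u$. With your choice of $M$, the high-frequency part contributes an additive $O(1)$ term, not a multiple of $\|u\|_{H^1}$. State it under the hypothesis $\|u\|_{H^1}\le K$ and replace $K$ by $\max\{K,1\}$, as the paper does with $M\ge1$; this is all your Step~3 actually uses.
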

This is the periodic $H^s$ version of the Br\'ezis--Gallou\"et argument
\cite{BrezisGallouet1980}, with $\delta$ retained in the
estimates; Appendix~\ref{app:envelope} gives the proof and its parameter
dependence.  For the focusing equation we retain
Assumption~\ref{ass:Hs-envelope}; local $H^s$ theory supplies it for
sufficiently small $T$ depending on $\|u_0\|_{H^s}$
\cite[Section~3.3]{Tao2006}.

Fix $M_0,N_0\ge1$.  We use one notation for the Fourier regions in
the three support geometries:
\begin{equation}\label{eq:box-bands}
 B_j=
 \begin{cases}
 \{(m,n):|m|\le(2j+1)M_0,\ |n|\le(2j+1)N_0\},
       &\text{rectangle},\\
 \mathbb Z\times\{n:|n|\le(2j+1)N_0\},
       &\text{strip bounded in }n,\\
 \{m:|m|\le(2j+1)M_0\}\times\mathbb Z,
       &\text{strip bounded in }m.
 \end{cases}
\end{equation}
After choosing one geometry, keep that choice throughout the
construction and put
\[
 B_{-1}=\varnothing,\qquad \Gamma_j=B_j\setminus B_{j-1}.
\]
The geometric initial-tail condition below uses the rectangular choice.
The corresponding layer energy and exterior energy are
\[
 \mathscr E_j^\delta(t)
 =\sum_{k\in\Gamma_j}(1+\omega_k)|\widehat u_k^\delta(t)|^2,
 \qquad
 \mathscr E_{>N}^\delta(t)
 =\sum_{k\notin B_N}(1+\omega_k)|\widehat u_k^\delta(t)|^2.
\]
These are the squared $H^1$ norms of the corresponding Fourier projections.

\subsection{Main theorem}
\begin{theorem}[Fixed-order Fourier strata]\label{cor:fixed-order}
Assume Assumption~\ref{ass:Hs-envelope} and fix an integer $N\ge1$.
For fixed $T$, Fourier regions, and the prescribed $H^s$ envelope,
there are $C_N<\infty$ and $\delta_N^*>0$, independent of
$\delta$, such that the following hold with
$\delta_N=\min\{\bar\delta,\delta_N^*\}$.
All time suprema are over $0\le t\le T\delta^{-1}$ and
$0<\delta\le\delta_N$.
\begin{enumerate}[label=\textup{(\roman*)},leftmargin=2.4em]
\item For the rectangular choice of $B_j$, if $\supp\widehat u_0\subset B_0$, then
\begin{equation}\label{eq:main-box-layer}
 \sup_t\|P_{\Gamma_j}u^\delta(t)\|_{H^1}
 \le C_N\delta^j\quad(1\le j\le N),\qquad
 \sup_t\|P_{B_N^c}u^\delta(t)\|_{H^1}
 \le C_N\delta^{N+1}.
\end{equation}
\item For either strip choice of $B_j$, if
$\supp\widehat u_0\subset B_0$, the same estimates
\eqref{eq:main-box-layer} hold with that choice of $B_j,\Gamma_j$.
\item For the rectangular choice of $B_j$, if $E_*>0$, $0<\eta<1$, and
\begin{equation}\label{eq:geometric-initial-tail}
 \|P_{\Gamma_j}u_0\|_{H^1}^2\le E_*\eta^j\qquad(j\ge0),
\end{equation}
then there is $C_{N,E_*}$, also allowed to depend on $E_*$ but
independent of $0<\eta<1$, such that
\begin{align}
 \sup_t\|P_{\Gamma_j}u^\delta(t)\|_{H^1}
 &\le C_{N,E_*}
       \bigl(E_*^{1/2}\eta^{j/2}+\delta^j\bigr),
       &&1\le j\le N,\label{eq:main-geometric-layer}\\
 \sup_t\|P_{B_N^c}u^\delta(t)\|_{H^1}
 &\le C_{N,E_*}
 \left(\frac{E_*^{1/2}\eta^{(N+1)/2}}{\sqrt{1-\eta}}
       +\delta^{N+1}\right).
 \label{eq:main-geometric-tail}
\end{align}
\end{enumerate}
The constants can be chosen for both signs of $\lambda$.
\end{theorem}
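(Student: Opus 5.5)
The plan is to pass to interaction-picture Fourier coordinates $z_k(t)=e^{i\omega_k t}\widehat u^\delta_k(t)$, so that \eqref{eq:nls-ivp} becomes
\[
 i\dot z_k=\lambda\delta\sum_{k_1-k_2+k_3=k}e^{i\Omega t}z_{k_1}\bar z_{k_2}z_{k_3},
 \qquad \Omega=\omega_k-\omega_{k_1}+\omega_{k_2}-\omega_{k_3},
\]
and to build a filtered Poincar\'e--Dulac normal form up to order $N$ in $\delta$.

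\textbf{Resonance structure and filtering.} Since $\pi^2\notin\mathbb Q$, a phase $\Omega=4(A+\pi^2B)$ vanishes only if $A=B=0$. Each coordinate then separately satisfies the one-dimensional resonance relation, so $\{m,m_2\}=\{m_1,m_3\}$ and likewise in $n$. Hence a resonant output mode has each coordinate equal to the corresponding coordinate of some input. Consequently resonant terms map $B_j\times B_j\times B_j$ into $B_j$: they never push energy outward, in either the rectangle or the strip geometries. Nonresonant terms with inputs in layers $j_1,j_2,j_3$ output into $B_{j_1+j_2+j_3+1}$, since the half-widths add as $(2j_1+1)+(2j_2+1)+(2j_3+1)=2(j_1+j_2+j_3+1)+1$.

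The filtering rule at order $\ell$ is to remove only those nonresonant monomials whose output layer exceeds the layer index already paid for by the accumulated powers of $\delta$ on its inputs. For those monomials the inputs are confined to the finitely many layers $\le N$, so the phases take values $4(A+\pi^2B)$ with $|A|,|B|$ bounded by constants depending on $N,M_0,N_0$. This gives a uniform gap $|\Omega|\ge\gamma_N>0$, which is the ``filtered phase gap''. In the strip case only one coordinate is bounded; there I would use the coordinatewise identity to show that the unbounded coordinate contributes a resonant difference of zero, so the gap survives. Each removal is implemented by a homogeneous polynomial transformation with coefficients divided by $\Omega$, bounded on $h^1$ through the convolution estimates of Appendix~\ref{app:convolution-phase}, using $s>2$ (so $h^s\subset\ell^1$ with a spare weight). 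After $N$ steps the transformed equation contains three kinds of terms: resonant layer-preserving terms, retained nonresonant terms whose outputs lie in layers already weighted consistently, and remainders of size $O(\delta^{N+1})$ in $h^1$ on the envelope of Assumption~\ref{ass:Hs-envelope}.

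\textbf{Weighted energy and transfer.} In the new variables $w$, define the weighted energy $\sum_j\delta^{-2j}\|P_{\Gamma_j}w\|_{h^1}^2$ for $j\le N$, plus $\delta^{-2N-2}$ times the exterior energy. Differentiating, the resonant terms cost only $O(\delta)$ times the weighted energy, because the output layer never exceeds the maximum input layer. Retained terms are controlled by the same weights, since by construction their output layer is at most the sum of the weights carried by their inputs. Remainders contribute $O(\delta^{N+1})$. Gr\"onwall over $t\le T\delta^{-1}$ then bounds the weighted energy by $C_N e^{CT}$ times its initial value plus $C_N$. Initially the near-identity map moves the support only by $O(\delta)$ per order into layers consistent with the weights, so the initial weighted energy is $O(1)$.

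Undoing the finite coordinate change, which is $\delta$-close to the identity and layer-compatible, transfers the bounds to $u^\delta$. I would equivalently phrase this as building a layered approximate solution and running an $h^1$ stability estimate against the exact solution, with the $H^s$ envelope controlling the cubic Lipschitz constants.

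\textbf{Case (iii).} For geometric tails the same scheme is run with initial weighted layer energies $E_*\eta^j$. Summing the exterior layers gives the factor $\sum_{j>N}\eta^{j}=\eta^{N+1}/(1-\eta)$. Uniformity in $\eta$ follows because only $E_*$ and the envelope enter the constants.

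\textbf{Main obstacle.} The hardest step is the bookkeeping in the filtered construction. It has to be arranged so that every retained nonresonant term is genuinely controlled by the layer weights. At the same time, the removed terms must have inputs in bounded layers, since otherwise the gap fails, especially in the strip case. I would first verify the scheme at order $\ell=1$ by explicit calculation, then set up an induction on the order using multilinear layer-index counting.
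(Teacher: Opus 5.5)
\textbf{Gap in the transfer step.} Your normal-form stage is in line with the paper: interaction variables, a layer filter that confines removed blocks to finitely many layers, and a uniform gap for those phases. The weighted-energy argument, however, does not close as stated.

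You claim that resonant terms cost only $O(\delta)$ times the weighted energy ``because the output layer never exceeds the maximum input layer''. That is true only for the cubic field $Z_0=\mathcal RF$. The higher resonant coefficients $Z_r$, $r\ge1$, created by the coordinate changes do push mass outward. Example~\ref{prop:quintic-channel} gives $z$ supported in $B_0$ with $(Z_1(z))_d\ne0$ for some $d\notin B_0$. What survives is that $Z_r$ has filtration order $r$ (Proposition~\ref{prop:filtration-bounds}, via the coordinatewise sums-of-squares identity). So $\delta^{r+1}Z_r$ reaches weighted layer $j$ only if its inputs \emph{jointly} supply a factor $\delta^{\,j-r}$, and the retained nonresonant blocks behave the same way.

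This has two consequences. First, the derivative of your weighted energy is polynomial, not linear, in the weighted layer norms, so a single Gr\"onwall or bootstrap does not cover $[0,T\delta^{-1}]$ for arbitrary fixed $T$. Second, the convolution estimates allow only one input in $h^1$; the others must be measured in $h^s$, where an $h^1$-weighted energy gives no smallness. On the strips the layers are unbounded, so you cannot trade $h^s$ for $h^1$ there.

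\textbf{How the paper avoids this.} It does not run a weighted energy on the true solution. Instead it builds the layered profile $b^{<K}=\sum_{\ell<K}h^\ell\beta_\ell(\delta t)$ explicitly. Here $\beta_0$ follows the cubic resonant flow, and each $\beta_\ell$ solves $\partial_\tau\beta_\ell=DZ_0(\beta_0)\beta_\ell+\mathcal F_\ell$ with forcing from lower coefficients. The propagator of $DZ_0(\beta_0)$ preserves every $P_{B_j}h^s$ (Lemma~\ref{lem:principal-flow}); this is where cubic layer preservation is actually used. The forcing lies in $B_\ell$ by the filtration orders. Hence $\|P_jb^{<K}\|_{h^s}\le Ch^j$, and every retained block evaluated on the profile is $O_{h^s}(\delta h^K)$. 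The comparison with the exact normal-form solution is then \emph{linear}: the $h^1$ Lipschitz constant is $C\delta$, with all other inputs bounded by the envelope, which gives $O(E+h^K)$ on the whole interval. Your remark that this is ``equivalent'' skips exactly this construction.

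You could also repair your own scheme by a triangular induction on $\delta^{-j}\|P_{B_{j-1}^c}w\|$. Treat terms with an input outside $B_{j-1}$ linearly, and terms with all inputs inside $B_{j-1}$ as forcing from layers already controlled. For the strips this induction has to be run in $h^s$.

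\textbf{Smaller points.}
\begin{enumerate}[leftmargin=2.4em]
\item In the strip phase gap, the unbounded coordinate's contribution $A$ need not vanish. The gap comes from $|B|$ being bounded, from $|A|\ge1$ when $B=0$, and from the finitely many positive values $\dist(\pi^2B,\mathbb Z)$ otherwise.
\item You need $K=N+1$ normal-form steps, so that the remainder is $\delta^{N+2}$ in physical time.
\item In (iii) the weights must be powers of $h=\max\{\delta,\sqrt\eta\}$, not of $\delta$, since $E_*\eta^j\delta^{-2j}$ is unbounded.
\item In (iii) the initial exterior tail is only $h^1$-small, and the regime where $h$ is not small must be covered directly by the envelope.
\end{enumerate}
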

For the defocusing equation, Lemma~\ref{lem:defocusing-envelope}
supplies the envelope for every fixed $T$, so no additional
solution-envelope assumption is needed.  Here $\bar\delta=1$;
the normal-form threshold $0<\delta\le\delta_N$ remains.
In particular, parts~\textup{(i)}--\textup{(ii)} give the geometric
energy hierarchy
\[
 \sup_t\mathscr E_j^\delta(t)\le C_N^2\delta^{2j},
 \qquad
 \sup_t\mathscr E_{>N}^\delta(t)\le C_N^2\delta^{2N+2}.
\]
Part~\textup{(iii)} gives
$\sup_t\mathscr E_j^\delta(t)
 \le2C_{N,E_*}^2(E_*\eta^j+\delta^{2j})$.
The constants need not be uniform in $N$ or the aspect ratio.
Uniformity in $\eta$ uses the common $H^s$ envelope; the factor
$(1-\eta)^{-1/2}$ in \eqref{eq:main-geometric-tail} comes from summing
the initial exterior energies.

\section{Filtered normal forms}\label{sec:normal-form}
\subsection{First-order filtered normal form}
\label{subsec:one-step}
We calculate the first change of variables in full, including the terms
that the Fourier filter leaves in the equation.  The calculation uses
the same notation for the three choices of \(B_j\) in
\eqref{eq:box-bands}.

\smallskip\noindent\emph{Interaction variables.}
Use physical time $t$ throughout.
Define the interaction coefficients by
\[
 a_k(t)=e^{i\omega_kt}\widehat u_k^\delta(t).
\]
The exponential removes the linear Schr\"odinger oscillation.
Differentiating and using \eqref{eq:nls-ivp} gives
\begin{equation}\label{eq:slow-interaction}
 \frac{d a_k}{dt}
 =-i\lambda\delta\sum_{k_1-k_2+k_3=k}
   e^{-i\Omega t}
   a_{k_1}\overline{a_{k_2}}a_{k_3},
 \qquad
 \Omega=\omega_{k_1}-\omega_{k_2}+\omega_{k_3}-\omega_k.
\end{equation}
When needed, write the phase as $\Omega(k_1,k_2,k_3,k)$.
Abbreviate the cubic sum without its prefactor $\delta$ by
\[
 F_k(t,z)
 =-i\lambda\sum_{k_1-k_2+k_3=k}
   e^{-i\Omega t}z_{k_1}\overline{z_{k_2}}z_{k_3}.
\]
Then $d a/dt=\delta F(t,a(t))$.

We use \(\mathcal R\) to select the resonant terms, for which
\(\Omega=0\), and \(\mathcal N\) to select the nonresonant terms,
for which \(\Omega\ne0\).  Explicitly,
\begin{align}
 (\mathcal RF)_k(z)
 &=-i\lambda
   \sum_{\substack{k_1-k_2+k_3=k\\\Omega=0}}
        z_{k_1}\overline{z_{k_2}}z_{k_3},
 \label{eq:first-resonant-sum}\\
 (\mathcal NF)_k(t,z)
 &=-i\lambda
   \sum_{\substack{k_1-k_2+k_3=k\\\Omega\ne0}}
        e^{-i\Omega t}z_{k_1}\overline{z_{k_2}}z_{k_3}.
 \label{eq:first-nonresonant-sum}
\end{align}
Their sum is \(F\).  We leave the first sum unchanged and split the
second sum further according to the input layers.

\smallskip\noindent\emph{Layer projections and interaction filtering.}
Fix the terminal depth \(K=N+1\).  For the chosen family \(B_j\), define
\begin{equation}\label{eq:saturated-cost}
 P_j=P_{\Gamma_j}\quad(0\le j<K),\qquad
 P_K=P_{B_{K-1}^{\,c}},\qquad
 \sum_{j=0}^K P_j=I.
\end{equation}
Recall from \eqref{eq:box-bands} that, for rectangles, \(B_{K-1}\)
is the set of modes with \(|m|\le(2K-1)M_0\) and
\(|n|\le(2K-1)N_0\); for a strip, only the corresponding coordinate
bound is imposed.  It is the union of the layers
\(\Gamma_0,\ldots,\Gamma_{K-1}\).
For \(j<K\), \(P_j\) projects onto the \(j\)th Fourier layer.
The projection \(P_K\), called the \emph{outer block}, collects all
frequencies outside \(B_{K-1}\).  It is the entire exterior, rather
than only the next annular layer \(\Gamma_K\).
For an existing vector \(z\), let
\[
 z_{k,j}:=(P_jz)_k,\qquad z_k=\sum_{j=0}^K z_{k,j}.
\]
The last equality decomposes \(z\); it introduces no new unknown.
Substitution in \eqref{eq:first-nonresonant-sum} gives
\[
 (\mathcal NF)_k(t,z)
 =-i\lambda
  \sum_{\substack{k_1-k_2+k_3=k\\\Omega\ne0}}
  e^{-i\Omega t}
  \sum_{j_1,j_2,j_3=0}^K
  z_{k_1,j_1}\overline{z_{k_2,j_2}}z_{k_3,j_3}.
\]

We select for elimination the input triples in
\[
 \mathcal J_{K,1}
 =\{(j_1,j_2,j_3)\in\{0,\ldots,K\}^3:
                         1+j_1+j_2+j_3\le K\}.
\]
Let \(\mathcal Q_{K,1}\mathcal NF\) denote the preceding sum restricted
to \(\mathcal J_{K,1}\).  We call a nonresonant term \emph{retained}
if it is left in the transformed equation at the order being treated,
and \emph{unretained} if the coordinate correction cancels it at that
order.  Thus \(\mathcal Q_{K,1}\mathcal NF\) is the unretained part;
the complementary sum is retained and will be controlled by the layer
estimates in Section~\ref{sec:transfer}.  The resonant sum is also retained.

\smallskip\noindent\emph{Splitting the nonresonant sum and choosing the correction.}
The retained nonresonant part is the following explicit sum:
\begin{equation}\label{eq:first-unretained-sum}
 (E_{0,K})_k(t,z)
 =-i\lambda
 \sum_{\substack{k_1-k_2+k_3=k\\\Omega\ne0}}
 e^{-i\Omega t}
 \sum_{\substack{0\le j_1,j_2,j_3\le K\\1+j_1+j_2+j_3>K}}
 z_{k_1,j_1}\overline{z_{k_2,j_2}}z_{k_3,j_3}.
\end{equation}
Thus
\[
 F=\mathcal RF+\mathcal Q_{K,1}\mathcal NF+E_{0,K}.
\]
The two parts that this first cancellation will leave together are
\(\mathcal RF\) and \(E_{0,K}\).  We denote their sum by
\[
 C_0(t,z):=\mathcal RF(z)+E_{0,K}(t,z).
\]
In particular, \(C_0\) is not asserted to be entirely resonant.

For the unretained nonresonant sum, integrate the exponential while
holding the input vector fixed.  Since
\(\partial_t(e^{-i\Omega t}/(-i\Omega))
=e^{-i\Omega t}\), choose
\begin{equation}\label{eq:first-generator}
 (G_1)_k(t,z)
 =\lambda
 \sum_{\substack{k_1-k_2+k_3=k\\\Omega\ne0}}
 \frac{e^{-i\Omega t}}{\Omega}
 \sum_{\boldsymbol j\in\mathcal J_{K,1}}
 z_{k_1,j_1}\overline{z_{k_2,j_2}}z_{k_3,j_3}.
\end{equation}
This \(G_1\) defines the desired correction \(\delta G_1\) to the
variables.  It satisfies
\[
 \partial_t G_1=\mathcal Q_{K,1}\mathcal NF,\qquad
 F-\partial_t G_1=C_0.
\]
Thus its partial time derivative gives the unretained nonresonant sum to be cancelled.
The condition \(\Omega\ne0\) makes each division well-defined.
For a strip, however, the sum still involves infinitely many modes.
To bound \(G_1\) as a cubic map on \(h^s\), we also need a uniform
bound on \(1/|\Omega|\) over the terms selected for elimination and a convolution
estimate.  Lemmas~\ref{lem:majorant-calculus} and~\ref{prop:retained-gap}
give these bounds, with proofs in Appendix~\ref{app:convolution-phase}.

\smallskip\noindent\emph{Differentiating the change of coordinates.}
Define the new variable $b(t)$ by
\begin{equation}\label{eq:first-additive-map}
 a(t)=b(t)+\delta G_1\bigl(t,b(t)\bigr).
\end{equation}
Theorem~\ref{prop:partial-normal-form} establishes invertibility on suitable domains.
Both arguments of $G_1$ vary along $b(t)$, whereas $\partial_tG_1$
holds the state fixed.  With $\delta$ fixed, the chain rule gives
\begin{align}
 \frac{d}{dt}G_1\bigl(t,b(t)\bigr)
 &=\partial_t G_1(t,b)
   +D_bG_1(t,b)\left[\frac{db}{dt}\right],
 \label{eq:first-chain-rule}\\
 \frac{da}{dt}
 &=\frac{db}{dt}+\delta\partial_t G_1(t,b)
   +\delta D_bG_1(t,b)\left[\frac{db}{dt}\right].
 \nonumber
\end{align}
Substitute $d a/dt=\delta F(t,a)$ and $a=b+\delta G_1(t,b)$,
then move $\delta\partial_tG_1$ to the right:
\begin{equation}\label{eq:first-implicit-equation}
 (I+\delta D_bG_1)\frac{db}{dt}
 =\delta F(t,b+\delta G_1)-\delta\partial_t G_1.
\end{equation}
Here $G_1$ and its derivatives are evaluated at $(t,b(t))$.

To see what the state derivative on the left means, put
\(h_{k,j}=(P_jh)_k\).  Differentiating the three factors of
\eqref{eq:first-generator} separately gives
\begin{equation}\label{eq:first-generator-derivative}
 \begin{split}
 (D_bG_1[h])_k
 ={}&\lambda
 \sum_{\substack{k_1-k_2+k_3=k\\\Omega\ne0}}
 \frac{e^{-i\Omega t}}{\Omega}
 \sum_{\boldsymbol j\in\mathcal J_{K,1}}
 \bigl(
 h_{k_1,j_1}\overline{b_{k_2,j_2}}b_{k_3,j_3}\\
 &\qquad{}
 +b_{k_1,j_1}\overline{h_{k_2,j_2}}b_{k_3,j_3}
 +b_{k_1,j_1}\overline{b_{k_2,j_2}}h_{k_3,j_3}
 \bigr).
 \end{split}
\end{equation}
The derivative is over the underlying real space; the conjugated slot
therefore contributes \(\overline{h_{k_2,j_2}}\).

\smallskip\noindent\emph{Expanding the transformed nonlinearity.}
Write \(g=G_1(t,b)\), with components given by
\eqref{eq:first-generator}.  Since the nonlinearity is cubic, its
Taylor expansion in the real parameter \(\delta\), at fixed
\(t,b\), terminates exactly:
\begin{equation}\label{eq:first-cubic-expansion}
 \begin{split}
 F_k(t,b+\delta g)
 ={}&-i\lambda\sum_{k_1-k_2+k_3=k}e^{-i\Omega t}
 \Bigl[
 b_{k_1}\overline{b_{k_2}}b_{k_3}\\
 &+\delta\bigl(
 g_{k_1}\overline{b_{k_2}}b_{k_3}
 +b_{k_1}\overline{g_{k_2}}b_{k_3}
 +b_{k_1}\overline{b_{k_2}}g_{k_3}\bigr)\\
 &+\delta^2\bigl(
 g_{k_1}\overline{g_{k_2}}b_{k_3}
 +g_{k_1}\overline{b_{k_2}}g_{k_3}
 +b_{k_1}\overline{g_{k_2}}g_{k_3}\bigr)\\
 &+\delta^3g_{k_1}\overline{g_{k_2}}g_{k_3}
 \Bigr].
 \end{split}
\end{equation}
The three terms multiplying \(\delta\) arise by replacing exactly one
input by \(g\); those multiplying \(\delta^2\) replace exactly two;
the last term replaces all three.  We now abbreviate these already
displayed sums by
\begin{align}
 (V_1)_k
 &:=-i\lambda\sum_{k_1-k_2+k_3=k}e^{-i\Omega t}
 \bigl(g_{k_1}\overline{b_{k_2}}b_{k_3}
 +b_{k_1}\overline{g_{k_2}}b_{k_3}
 +b_{k_1}\overline{b_{k_2}}g_{k_3}\bigr),
 \label{eq:first-linear-coefficient}\\
 (W_1)_k
 &:=-i\lambda\sum_{k_1-k_2+k_3=k}e^{-i\Omega t}
 \Bigl(
 g_{k_1}\overline{g_{k_2}}b_{k_3}
 +g_{k_1}\overline{b_{k_2}}g_{k_3}
 +b_{k_1}\overline{g_{k_2}}g_{k_3}
 +\delta g_{k_1}\overline{g_{k_2}}g_{k_3}
 \Bigr).
 \label{eq:first-higher-coefficient}
\end{align}
Thus \(V_1=D_bF[g]\) is independent of \(\delta\) at fixed
\(t,b\), whereas \(W_1=W_1(t,b;\delta)\) includes the
remaining two orders.  Equation~\eqref{eq:first-cubic-expansion} reads
\[
 F(t,b+\delta g)
 =F(t,b)+\delta V_1+\delta^2W_1.
\]
These coefficients are concrete Fourier sums.  For example, inserting
\eqref{eq:first-generator} into the first term of \(V_1\) gives
\begin{equation}\label{eq:first-quintic-example}
 \begin{split}
 -i\lambda^2
 \sum_{\substack{k_1-k_2+k_3=k\\
                  \ell_1-\ell_2+\ell_3=k_1\\\Omega_{\rm in}\ne0}}
 \frac{e^{-i(\Omega_{\rm out}+\Omega_{\rm in})t}}
      {\Omega_{\rm in}}
 \sum_{\boldsymbol j\in\mathcal J_{K,1}}
 b_{\ell_1,j_1}\overline{b_{\ell_2,j_2}}b_{\ell_3,j_3}
 \overline{b_{k_2}}b_{k_3},
 \end{split}
\end{equation}
where \(\Omega_{\rm out}=\Omega(k_1,k_2,k_3,k)\) and
\(\Omega_{\rm in}=\Omega(\ell_1,\ell_2,\ell_3,k_1)\).
The internal frequency \(\omega_{k_1}\) cancels, leaving the total phase
\[
 \Omega_{\rm out}+\Omega_{\rm in}
 =\omega_{\ell_1}-\omega_{\ell_2}+\omega_{\ell_3}
   -\omega_{k_2}+\omega_{k_3}-\omega_k.
\]
For the term with \(\overline{g_{k_2}}\), the inner indices instead
satisfy \(\ell_1-\ell_2+\ell_3=k_2\).  With
\(\Omega_{\rm in}^{(2)}=\Omega(\ell_1,\ell_2,\ell_3,k_2)\),
complex conjugation reverses its phase, giving
\(\Omega_{\rm out}-\Omega_{\rm in}^{(2)}\).
Thus the new coefficient contains quintic products and new total
phases, which can be zero or nonzero.

\smallskip\noindent\emph{Solving explicitly for the new differential equation.}
Using \(F-\partial_t G_1=C_0\) in
\eqref{eq:first-implicit-equation} now gives
\[
 (I+\delta D_bG_1)\frac{db}{dt}
 =\delta C_0+\delta^2 V_1+\delta^3W_1.
\]
Let \(A_1=I+\delta D_bG_1\) record the operator on the left.
On the domain where \(A_1\) is invertible, apply \(A_1^{-1}\) to
both sides:
\[
 \frac{db}{dt}
 =\delta A_1^{-1}C_0+\delta^2 A_1^{-1}V_1+\delta^3A_1^{-1}W_1.
\]
To separate the powers of \(\delta\), use the exact identity
\[
 A_1^{-1}(I+\delta D_bG_1)=I,
 \qquad
 A_1^{-1}=I-\delta A_1^{-1}D_bG_1.
\]
First apply this identity to \(A_1^{-1}C_0\), and combine the two
terms carrying a factor \(\delta^2\):
\[
 \begin{aligned}
 \frac{db}{dt}
 &=\delta C_0-\delta^2 A_1^{-1}D_bG_1[C_0]
        +\delta^2 A_1^{-1}V_1+\delta^3A_1^{-1}W_1\\
 &=\delta C_0+\delta^2 A_1^{-1}\bigl(V_1-D_bG_1[C_0]\bigr)
        +\delta^3A_1^{-1}W_1.
 \end{aligned}
\]
Apply the same identity once more, now to
\(A_1^{-1}(V_1-D_bG_1[C_0])\).  This gives
\[
 \begin{aligned}
 \frac{db}{dt}
 ={}&\delta C_0+\delta^2\bigl(V_1-D_bG_1[C_0]\bigr)\\
 &+\delta^3A_1^{-1}
 \Bigl(W_1-D_bG_1\bigl[V_1-D_bG_1[C_0]\bigr]\Bigr).
 \end{aligned}
\]
The quintic coefficient has now been obtained explicitly; denote it by
\begin{equation}\label{eq:first-quintic-coefficient}
 H_1:=V_1-D_bG_1[C_0].
\end{equation}
Both terms are explicit: \(V_1\) is the sum in
\eqref{eq:first-linear-coefficient}, and \(D_bG_1[C_0]\) is
\eqref{eq:first-generator-derivative} with \(h=C_0\), whose components
are given by \eqref{eq:first-resonant-sum} and
\eqref{eq:first-unretained-sum}.
Likewise, denote the remaining coefficient by
\[
 \mathcal E_{1,\delta}
 :=A_1^{-1}\bigl(W_1-D_bG_1[H_1]\bigr).
\]
All identities above are exact, so this definition includes the entire
remainder.
Consequently the first transformed equation, written componentwise, is
\begin{equation}\label{eq:one-step-result}
 \begin{split}
 \frac{db_k}{dt}
 ={}&-i\lambda\delta
       \sum_{\substack{k_1-k_2+k_3=k\\\Omega=0}}
       b_{k_1}\overline{b_{k_2}}b_{k_3}\\
 &-i\lambda\delta
       \sum_{\substack{k_1-k_2+k_3=k\\\Omega\ne0}}
       e^{-i\Omega t}
       \sum_{\substack{0\le j_1,j_2,j_3\le K\\1+j_1+j_2+j_3>K}}
       b_{k_1,j_1}\overline{b_{k_2,j_2}}b_{k_3,j_3}\\
 &+\delta^2(H_1)_k(t,b)
       +\delta^3(\mathcal E_{1,\delta})_k(t,b).
 \end{split}
\end{equation}
The first line is the original cubic resonant sum.  The second is the
retained nonresonant sum, left intact rather than divided by its phase.
The third contains the explicitly computed quintic correction and the
exact higher-order remainder.  On the domains established in
Theorem~\ref{prop:partial-normal-form}, this last remainder is uniformly
bounded for small \(\delta\).  The retained nonresonant sum is controlled
differently: its input indices satisfy \(j_1+j_2+j_3\ge K\), a layer
condition whose contribution is estimated in Section~\ref{sec:transfer}.
Equation~\eqref{eq:one-step-result} completes the first change of
variables.  It displays exactly what has been removed and what remains,
without treating $E_{0,K}$ as uniformly $O(\delta^K)$
on an arbitrary Sobolev ball.

\subsection{Recursion of the transformed equation}
\label{subsec:recursive-construction}
\label{sec:partial-normal-form}
We derive one complete step, including the remainder passed to the
next step.  Write $v^{[0]}=a$ and factor out the common $\delta$
from the right-hand side.  After $q-1$ coordinate changes, write
\begin{equation}\label{eq:current-transformed-equation}
 \partial_t v^{[q-1]}
 =\delta X^{[q-1]}_\delta(t,v^{[q-1]}),
 \qquad X^{[0]}_\delta=F.
\end{equation}
The first calculation gives
$X^{[1]}_\delta=C_0+\delta H_1+\delta^2\mathcal E_{1,\delta}$.
Write $Z_0=\mathcal RF$, so that $C_0=Z_0+E_{0,K}$; at later orders,
$Z_r$ and $E_{r,K}$ will denote the resonant and retained nonresonant
parts.  Step $q$, $1\le q\le K$, acts on the coefficient of
$\delta^{q-1}$ while preserving those of lower order.

Assume the preceding field is smooth in $\delta$ near zero, as
verified on common domains in Section~\ref{subsec:fixed-depth-construction}.
Its Taylor expansion at fixed $(t,z)$ is
\begin{equation}\label{eq:step-taylor}
 X^{[q-1]}_\delta(t,z)
 =\sum_{r=0}^{q-2}\delta^r\bigl(Z_r(z)+E_{r,K}(t,z)\bigr)
  +\delta^{q-1}H_{q-1}(t,z)
  +\delta^q\mathcal T_{q-1}(t,z;\delta).
\end{equation}
The sum contains the coefficients fixed by earlier steps and is empty
for $q=1$.  Taylor's formula defines the next coefficient and exact
remainder by
\begin{equation}\label{eq:coefficient-definition}
 H_{q-1}(t,z)=\frac1{(q-1)!}
 \left.\partial_\rho^{q-1}X^{[q-1]}_\rho(t,z)\right|_{\rho=0},
\end{equation}
\begin{equation}\label{eq:step-input-remainder}
 \mathcal T_{q-1}(t,z;\delta)
 =\frac1{(q-1)!}\int_0^1(1-v)^{q-1}
 \left.\partial_\rho^qX^{[q-1]}_\rho(t,z)\right|_{\rho=v\delta}
 \,\mathrm dv.
\end{equation}
These formulas include $\delta=0$.  Initially $H_0=F$ and
$\mathcal T_0=0$; for $q=2$ they recover $H_1$ and
$\mathcal T_1=\mathcal E_{1,\delta}$ from Section~\ref{subsec:one-step}.

To construct the correction from $H_{q-1}$, we use the Fourier form
already visible in the cubic and quintic calculations.  A cubic
monomial has one more unconjugated factor than conjugated factors.
Replacing a $z$ slot by a monomial with this property, or a $\bar z$
slot by its conjugate, preserves the difference.  Consequently a
degree-$2q+1$ monomial has $q+1$ factors $z$ and $q$ factors $\bar z$.
We can relabel these commuting factors to alternate the two types.
The internal frequency cancels under each substitution, as in
\eqref{eq:first-quintic-example}, so the resulting sums have the form
\begin{equation}\label{eq:vector-field-monomial}
 \begin{split}
 M_k(t,z)
 &=\sum_{k_1-k_2+\cdots-k_{2q}+k_{2q+1}=k}
 c(k_1,\ldots,k_{2q+1};k)e^{-i\Omega t}
 z_{k_1}\overline{z_{k_2}}\cdots\overline{z_{k_{2q}}}z_{k_{2q+1}},\\
 \Omega&=\omega_{k_1}-\omega_{k_2}+\cdots-\omega_{k_{2q}}+
          \omega_{k_{2q+1}}-\omega_k.
 \end{split}
\end{equation}
The multiplier $c$ includes earlier layer restrictions and phase
denominators and is independent of $t$.  A bounded multiplier means
that $|c|\le C$ uniformly over all Fourier
indices; the bound may depend on the fixed order, the depth $K$,
and the chosen regions, but not on the Fourier indices.
All state derivatives are real derivatives: in a conjugated slot, a
direction $h$ enters as $\bar h$.  The degree and boundedness of
$H_{q-1}$ are verified in the construction below.

Insert $I=\sum_{j=0}^K P_j$ in each input of a multilinear sum $M$
of degree $d=2q+1$.  Selecting the blocks to be eliminated defines
\begin{equation}\label{eq:vector-cost-retention}
 (\mathcal Q_{K,q}M)(t;h_1,\ldots,h_d)
 =\sum_{\substack{0\le j_i\le K\\q+j_1+\cdots+j_d\le K}}
 M(t;P_{j_1}h_1,\ldots,P_{j_d}h_d).
\end{equation}
Here $j_i$ is the layer label of the $i$th input, and $P_K$ projects
outside $B_{K-1}$.  Since $q\ge1$, no selected block has an input
from $P_K$.  The complementary nonresonant blocks are retained in the
equation and estimated in Section~\ref{sec:transfer}.  Related use of
initial-mode information to select frequency combinations appears in
\cite[Section~3.2]{GaucklerHairerLubichWeiss2012}.

As before, $\mathcal R$ selects $\Omega=0$ and $\mathcal N$ selects
$\Omega\ne0$.  We cancel the unretained part of $H_{q-1}$ by solving
$\partial_t G_q=\mathcal Q_{K,q}\mathcal NH_{q-1}$.
Integration of each oscillatory factor gives
\begin{equation}\label{eq:vector-homological-inverse}
 c e^{-i\Omega t}\longmapsto
 \frac{c e^{-i\Omega t}}{-i\Omega}\quad(\Omega\ne0).
\end{equation}
Denote this operation by $\mathcal S$, with zero value on the resonant
part.  Then $G_q=\mathcal S\mathcal Q_{K,q}\mathcal NH_{q-1}$, and
the coordinate correction is $\delta^qG_q$.  This is the usual
phasewise solution of the homological equation
\cite{BambusiGrebert2006,Grebert2007}, restricted by the input filter.
In particular,
\[
 H_{q-1}-\partial_t G_q
 =\mathcal RH_{q-1}+(I-\mathcal Q_{K,q})\mathcal NH_{q-1}.
\]
Write these surviving parts as $Z_{q-1}=\mathcal RH_{q-1}$ and
$E_{q-1,K}=(I-\mathcal Q_{K,q})\mathcal NH_{q-1}$.

Now make the substitution
\begin{equation}\label{eq:additive-recursion}
 v^{[q-1]}=\phi_q(t,v^{[q]};\delta),\qquad
 \phi_q(t,z;\delta)=z+\delta^qG_q(t,z).
\end{equation}
Differentiation along the orbit gives
\[
 (I+\delta^qDG_q)\partial_t v^{[q]}
 =\delta X^{[q-1]}_\delta(t,v^{[q]}+\delta^qG_q)
  -\delta^q\partial_t G_q,
\]
where $G_q$ and its derivatives are evaluated at $(t,v^{[q]})$.
Denote the operator on the left by
$A_q(t,z;\delta)=I+\delta^qDG_q(t,z)$.
Applying its inverse to both sides and factoring out $\delta$ gives the exact new equation
$\partial_t v^{[q]}=\delta X^{[q]}_\delta(t,v^{[q]})$, where
\begin{equation}\label{eq:exact-pullback-recursion}
 X^{[q]}_\delta(t,z)
 =A_q^{-1}\left[
 X^{[q-1]}_\delta(t,z+\delta^qG_q(t,z))
 -\delta^{q-1}\partial_t G_q(t,z)\right].
\end{equation}

To express this field in the form needed for the next step, first
expand its shifted state argument at fixed $t,\delta$:
\begin{equation}\label{eq:state-shift-integral}
 \begin{split}
 &X^{[q-1]}_\delta(t,z+\delta^qG_q)
       -X^{[q-1]}_\delta(t,z)\\
 &\qquad=\delta^q\int_0^1
 D_zX^{[q-1]}_\delta(t,z+v\delta^qG_q)[G_q]\,\mathrm dv
 =:\delta^q\mathcal D_q(t,z;\delta).
 \end{split}
\end{equation}
Here $G_q=G_q(t,z)$ throughout the integral.  This expansion
applies to the entire exact old field, including its remainder.
We can now substitute \eqref{eq:step-taylor} at the unshifted argument
$(t,z)$ into \eqref{eq:exact-pullback-recursion}.  Using
$H_{q-1}-\partial_t G_q=Z_{q-1}+E_{q-1,K}$ gives
\[
 X^{[q]}_\delta
 =A_q^{-1}\left[
 \sum_{r=0}^{q-1}\delta^r(Z_r+E_{r,K})
 +\delta^q(\mathcal T_{q-1}+\mathcal D_q)\right].
\]
All fields in the sum are evaluated at $(t,z)$, and the
remainders at $(t,z;\delta)$.
Next use the exact identity $A_q^{-1}=I-\delta^qA_q^{-1}DG_q$ on
the first term in brackets.  This yields
\begin{equation}\label{eq:completed-step-field}
 \begin{split}
 X^{[q]}_\delta
 &=\sum_{r=0}^{q-1}\delta^r(Z_r+E_{r,K})\\
 &\quad+\delta^q A_q^{-1}\left[
 \mathcal T_{q-1}+\mathcal D_q
 -DG_q\left[\sum_{r=0}^{q-1}\delta^r(Z_r+E_{r,K})\right]\right].
 \end{split}
\end{equation}
Thus all lower coefficients have the required form.  Denote the
remaining factor multiplying $\delta^q$ by
\begin{equation}\label{eq:one-step-remainder}
 \mathcal U_q
 :=A_q^{-1}\left[
 \mathcal T_{q-1}+\mathcal D_q
 -DG_q\left[\sum_{r=0}^{q-1}\delta^r(Z_r+E_{r,K})\right]\right].
\end{equation}
To separate its leading coefficient from its remainder, Taylor's
formula in $\delta$ gives
\[
 \mathcal U_q(t,z;\delta)
 =\mathcal U_q(t,z;0)
  +\delta\int_0^1
    \left.\partial_\rho\mathcal U_q(t,z;\rho)
    \right|_{\rho=v\delta}\,\mathrm dv.
\]
For $q<K$, define the coefficient and integral on the right as
\begin{equation}\label{eq:next-coefficient-remainder}
 H_q(t,z)=\mathcal U_q(t,z;0),\qquad
 \mathcal T_q(t,z;\delta)
 =\int_0^1\left.\partial_\rho\mathcal U_q(t,z;\rho)
              \right|_{\rho=v\delta}\,\mathrm dv.
\end{equation}
We have obtained the exact expression
\begin{equation}\label{eq:next-step-taylor}
 X^{[q]}_\delta(t,z)
 =\sum_{r=0}^{q-1}\delta^r\bigl(Z_r(z)+E_{r,K}(t,z)\bigr)
  +\delta^qH_q(t,z)
  +\delta^{q+1}\mathcal T_q(t,z;\delta).
\end{equation}
This is \eqref{eq:step-taylor} with $q$ replaced by $q+1$.
For $q=1$, $\mathcal D_1(t,z;0)=DF(t,z)[G_1]$, so
$H_1=DF[G_1]-DG_1[C_0]$, agreeing with
\eqref{eq:first-quintic-coefficient}.  At the last step $q=K$, stop
at \eqref{eq:completed-step-field} and put $R_K=\mathcal U_K$.

The integral formulas also specify the remainders for subsequent
estimates.  On the common inner domains constructed below, they are
smooth in the state and $\delta$ and, in particular, satisfy
\begin{equation}\label{eq:step-remainder-bounds}
 \|\mathcal U_q\|_{h^s}+\|\mathcal T_q\|_{h^s}\le C,\qquad
 \|D_z\mathcal U_q[h]\|_{h^1}
  +\|D_z\mathcal T_q[h]\|_{h^1}\le C\|h\|_{h^1}
 \quad(q<K).
\end{equation}
The same bounds hold for $\mathcal U_K$.
For the finite Taylor expansions, we also retain $\delta$--state
derivative bounds through total order $K-q+1$ for $\mathcal U_q$
and $K-q$ for $\mathcal T_q$, with at most one state increment
in $h^1$ and the others in $h^s$.
The constants are uniform in $t$ and sufficiently small $\delta$.
These bounds follow by induction from the following convolution and
phase estimates and the bounded inverses in
Section~\ref{subsec:fixed-depth-construction}.

The convolution estimates follow the coefficientwise approach of
the tame-modulus method \cite{BambusiGrebert2006}.  Full proofs of
both lemmas are in Appendix~\ref{app:convolution-phase}.

\begin{lemma}[Convolution estimates]\label{lem:majorant-calculus}
Fix $s>2$, $d\ge1$, and signs $\sigma_i\in\{1,-1\}$.  For a
multiplier satisfying $|c(k_1,\ldots,k_d;k)|\le A$, define on
finitely supported inputs
\[
 M_k(h_1,\ldots,h_d)
 =\sum_{\sum_i\sigma_i k_i=k}c(k_1,\ldots,k_d;k)
   \prod_{\sigma_i=1}(h_i)_{k_i}
   \prod_{\sigma_i=-1}\overline{(h_i)_{k_i}}.
\]
The sum extends to inputs in $h^s$, and
\begin{align}\label{eq:majorant-Hs}
 \|M(h_1,\ldots,h_d)\|_{h^s}
 &\le C_{d,s}A\prod_i\|h_i\|_{h^s},\\
 \|M(h_1,\ldots,h_d)\|_{h^1}
 &\le C_{d,s}A\|h_\ell\|_{h^1}
       \prod_{i\ne\ell}\|h_i\|_{h^s}\quad(1\le\ell\le d).
 \label{eq:majorant-H1}
\end{align}
The second estimate also defines the sum when only the designated
input $h_\ell$ belongs to $h^1$.  Restricting the summation to any
set of Fourier tuples preserves these bounds.  Finite sums, real
state derivatives of $M(z,\ldots,z)$, and substitutions into their
input slots admit finite sums of estimates of the same kind.
\end{lemma}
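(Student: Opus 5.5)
The plan is to reduce both estimates to a weighted Young inequality on $\mathbb Z^2$, by passing to absolute values of the Fourier coefficients and trading the Sobolev weight for an $\ell^1$ bound.

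\emph{Reduction to majorants.} Replace every coefficient by its modulus, and $c$ by the constant $A$. Write $\tilde h_i=|(h_i)_{k}|$ (conjugation does not change moduli). Reindex $k_i\mapsto\sigma_ik_i$, which also preserves $\langle k\rangle$. Then
\[
|M_k|\le A\,(\tilde h_1^{\sigma_1}*\cdots*\tilde h_d^{\sigma_d})_k ,
\]
where $\tilde h^{-1}_k=\tilde h_{-k}$. Once the right-hand side is shown to converge absolutely, the sum defining $M$ converges absolutely as well. This gives both the extension from finitely supported inputs (by density and multilinearity) and the claim that restricting to a subset of Fourier tuples preserves the bounds, since restriction only deletes nonnegative terms.

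\emph{The $h^s$ estimate.} I will use the weight inequality
\[
\langle k\rangle^s\le C_{d,s}\sum_i\langle k_i\rangle^s
\quad\text{for } k=\textstyle\sum_i\sigma_ik_i .
\]
It holds because $\langle k\rangle\lesssim 1+|k|$, uniformly in the aspect ratio up to constants, and $|k|\le\sum|k_i|\le d\max|k_i|$. Distributing the weight onto the largest-frequency factor, Young's inequality $\ell^2*\ell^1*\cdots*\ell^1\subset\ell^2$ gives
\[
\|M\|_{h^s}\le C A\sum_i\|h_i\|_{h^s}\prod_{l\ne i}\|\tilde h_l\|_{\ell^1} .
\]
It remains to bound $\|\tilde h\|_{\ell^1}\le C_s\|h\|_{h^s}$ by Cauchy--Schwarz, using $\sum_k\langle k\rangle^{-2s}<\infty$ on the two-dimensional lattice for $s>1$. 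This holds since $\omega_{m,n}\ge 4\min(1,\pi^2)|k|^2$. Here $s>2$ is more than enough.

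\emph{The $h^1$ estimate with a designated input.} Now $\langle k\rangle\le C\sum_i\langle k_i\rangle$. If the weight lands on $h_\ell$, I bound that factor in $\ell^2$ with weight $\langle\cdot\rangle$, giving $\|h_\ell\|_{h^1}$, and the others in $\ell^1$. If it lands on some $h_i$ with $i\ne\ell$, I put $\langle k_i\rangle\tilde h_i$ in $\ell^1$, which is controlled by $\|h_i\|_{h^s}$ because $\sum\langle k\rangle^{2-2s}<\infty$ when $s>2$. This is where $s>2$ is used. The factor $\tilde h_\ell$ is then placed in $\ell^2$, and $\|\tilde h_\ell\|_{\ell^2}\le\|h_\ell\|_{h^1}$. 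Young's inequality again gives \eqref{eq:majorant-H1}, and it also defines $M$ when only $h_\ell\in h^1$.

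\emph{Closure statements.} Finite sums follow from the triangle inequality. The real derivative of $M(z,\dots,z)$ in direction $h$ is a sum of $d$ multilinear terms of the same type, with $h$ (or $\bar h$) in one slot. Substituting into a slot a map $N$ of the same type produces, by the internal-index cancellation, a multilinear sum of the same form whose majorant is a nested convolution. Applying the estimates twice therefore gives bounds of the same kind, with the designated $h^1$ input propagating through whichever slot carries it.

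The only delicate point is the $h^1$ case in which the weight falls on a non-designated factor. Checking that $\langle k\rangle\tilde h\in\ell^1$ requires $s>2$ is the main obstacle; everything else is routine Young-type bookkeeping.
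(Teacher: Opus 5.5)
Your proposal is correct and follows the paper's proof essentially step by step. The shared steps are the reflected absolute-value majorant $|M_k|\le A(f_1*\cdots*f_d)(k)$, the weight splitting $\langle k\rangle^r\le C_{d,r}\sum_i\langle k_i\rangle^r$, Young's inequality with the weighted factor in $\ell^2$ and the others in $\ell^1$, and the use of $s>2$ exactly through $\sum_k\langle k\rangle|h_k|\le C_s\|h\|_{h^s}$ when the weight falls on a non-designated input. The only difference is cosmetic: for substitutions, the paper eliminates the internal index to get a single convolution whose multiplier is bounded by the product of the two bounds, whereas you apply the estimates twice; both give the claimed closure.
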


\Needspace{12\baselineskip}
\begin{lemma}[Filtered phase gap]\label{prop:retained-gap}
Fix $K\ge2$ and one choice of the regions $B_j$ in
\eqref{eq:box-bands}.  For $1\le q\le K$, let
$0\le j_i<K$, $k_i\in B_{j_i}$ $(1\le i\le2q+1)$, and suppose
$q+\sum_i j_i\le K$.  Set
\[
 k=k_1-k_2+\cdots-k_{2q}+k_{2q+1},\qquad
 \Omega=\omega_{k_1}-\omega_{k_2}+\cdots-\omega_{k_{2q}}+
          \omega_{k_{2q+1}}-\omega_k.
\]
There is $D_K<\infty$, depending only on $K$ and the chosen regions,
such that every such phase with $\Omega\ne0$ satisfies
$|\Omega|^{-1}\le D_K$.  This applies to rectangles and to either
infinite strip, and hence to every phase divided by in
\eqref{eq:vector-homological-inverse} after filtering.
\end{lemma}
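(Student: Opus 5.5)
The idea is to use the arithmetic structure of the phase. Since $\omega_{m,n}=4m^2+4\pi^2n^2$, every phase in the statement has the form
\[
 \Omega=4A+4\pi^2B,\qquad
 A=\sum_{i=1}^{2q+1}\pm m_i^2-m^2,\quad
 B=\sum_{i=1}^{2q+1}\pm n_i^2-n^2,
\]
where $A,B\in\mathbb Z$, $k_i=(m_i,n_i)$, $k=(m,n)$, and the signs alternate as in the definition of $\Omega$. I will show that in each geometry at least one of $A,B$ ranges over a finite set, with a bound depending only on $K$ and the regions. Irrationality of $\pi^2$ then gives a uniform lower bound for $|\Omega|$ whenever $\Omega\neq0$.

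\emph{Step 1: bounded coordinates stay bounded.} In the direction where the regions are bounded (both directions for rectangles, $n$ for the strip bounded in $n$, $m$ for the other strip), each input satisfies $|n_i|\le(2j_i+1)N_0\le(2K-1)N_0$ (respectively $|m_i|\le(2K-1)M_0$). Since $q\le K$, there are at most $2K+1$ inputs. The output $k=k_1-k_2+\cdots+k_{2q+1}$ therefore obeys
\[
 |n|\le\sum_i(2j_i+1)N_0\le(2K+1)(2K-1)N_0 ,
\]
and similarly for $m$. The filter condition $q+\sum j_i\le K$ would give a sharper bound, but it is not needed for finiteness. Consequently, in the bounded direction the integer ($B$ for the $n$-strip, $A$ for the $m$-strip, both for rectangles) ranges over a finite set $\mathcal F_K$ determined by $K,M_0,N_0$.

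\emph{Step 2: the rectangle.} Here both $A$ and $B$ lie in finite sets. Hence only finitely many values of $\Omega$ occur, and $D_K^{-1}$ can be taken as the minimum of $|\Omega|$ over the nonzero ones.

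\emph{Step 3: the strips.} First take the strip bounded in $n$, so that $B\in\mathcal F_K$ and $A\in\mathbb Z$ is arbitrary.
\begin{itemize}
 \item If $B=0$ and $\Omega\ne0$, then $A\neq0$, so $|\Omega|=4|A|\ge4$.
 \item If $B\ne0$, then $|\Omega|=4|A+\pi^2B|\ge4\,\dist(\pi^2B,\mathbb Z)$. This is positive because $\pi^2B\notin\mathbb Q$.
\end{itemize}
Taking the minimum over the finitely many nonzero $B\in\mathcal F_K$ gives a positive lower bound.

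Next take the strip bounded in $m$, so that $A\in\mathcal F_K$ and $B\in\mathbb Z$ is arbitrary.
\begin{itemize}
 \item If $A=0$ and $\Omega\ne0$, then $|\Omega|=4\pi^2|B|\ge4\pi^2$.
 \item If $A\neq0$ and $B=0$, then $|\Omega|=4|A|\ge4$.
 \item If $A\ne0$, then in general $|\Omega|=4\pi^2|B+A/\pi^2|\ge4\pi^2\dist(A/\pi^2,\mathbb Z)>0$, again by irrationality of $\pi^2$.
\end{itemize}
Minimizing over the finite set $\mathcal F_K\setminus\{0\}$ concludes this case.

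The only genuinely delicate point is Step 3. There the phase ranges over an infinite set, and a uniform gap exists only because one of the two integer coefficients is confined to a finite set while $\pi^2\notin\mathbb Q$. The same argument works for $\mathbb T_L\times\mathbb T_1$ with $L^2\notin\mathbb Q$. The final assertion follows because every phase divided by in \eqref{eq:vector-homological-inverse} comes from a block selected by $\mathcal Q_{K,q}$. Such a block has no $P_K$ input, so each $k_i\in\Gamma_{j_i}\subset B_{j_i}$ with $j_i<K$ and $q+\sum j_i\le K$, which are exactly the hypotheses above. This uses the monomial form \eqref{eq:vector-field-monomial}, whose phase is the one in the statement.
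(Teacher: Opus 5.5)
Your proof is correct and follows the paper's argument. You write $\Omega=4(A+\pi^2B)$, use momentum to confine the integer from each bounded coordinate to a finite set, and in the strip cases invoke irrationality of $\pi^2$ through $\dist(\pi^2B,\mathbb Z)$ or $\dist(A/\pi^2,\mathbb Z)$. The only difference is minor: you bound the bounded coordinates using $j_i<K$ and $2q+1\le 2K+1$ instead of the filter condition $q+\sum_i j_i\le K$, which is what gives the paper's sharper $(2K+1)R_0$; your cruder bound works equally well for finiteness.
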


\Needspace{5\baselineskip}
\subsection{Composition and exact normal form}
\label{subsec:fixed-depth-construction}
We now justify all $K$ steps of \eqref{eq:current-transformed-equation}
on a common domain and identify the final equation.  With $b=v^{[K]}$,
the substitutions \eqref{eq:additive-recursion} give
\[
 a=(\phi_1\circ\cdots\circ\phi_K)(b)
   =:\Psi_{\delta,t}(b).
\]
\begin{theorem}[Exact partial normal form]\label{prop:partial-normal-form}
Fix $K\ge2$, $s>2$, $R>0$, and a choice of the regions $B_j$.
For sufficiently small $\delta_K>0$, the recursion
\eqref{eq:coefficient-definition}--\eqref{eq:exact-pullback-recursion}
defines all $K$ steps for $|\delta|\le\delta_K$.
On $U=B_{h^s}(0,R+1/4)$ the map $\Psi_{\delta,t}$ is a real
diffeomorphism onto its image, which contains
$\overline B_{h^s}(0,R)$.  It and its inverse are $C^1$ in $t$ and smooth in the state and
in $\delta$ at fixed $t$.  Their displacements are
$O(|\delta|)$.  Derivatives with respect to $\delta$ and the state through total order
$K+1$ have uniform bounds on inner domains, including bounds with
one state increment measured in $h^1$ and all others in $h^s$.
The bounds are uniform in $t$.

For $\delta>0$, setting $a=\Psi_{\delta,t}(b)$ gives exactly
\begin{equation}\label{eq:partial-normal-form}
 \partial_t b
 =\sum_{r=0}^{K-1}\delta^{r+1} Z_r(b)
  +\sum_{r=0}^{K-1}\delta^{r+1} E_{r,K}(t,b)
  +\delta^{K+1}R_K(t,b;\delta).
\end{equation}
Here $Z_r=\mathcal RH_r$, $Z_0=\mathcal RF$, and
$E_{r,K}=(I-\mathcal Q_{K,r+1})\mathcal NH_r$.
In each Fourier component these are homogeneous polynomial sums of
degree $2r+3$, of the form \eqref{eq:vector-field-monomial} with
$q=r+1$ and bounded multipliers.
The sum $Z_r$ contains only zero phases and is independent of $t$;
$E_{r,K}$ contains nonzero factors $e^{-i\Omega t}$ and only
the retained nonresonant input blocks.  If the $i$th input in such a block is
$P_{j_i}b$, its layer labels $j_i\in\{0,\ldots,K\}$ satisfy
\begin{equation}\label{eq:flat-remainder-form}
 r+j_1+\cdots+j_{2r+3}\ge K.
\end{equation}
The exact remainder $R_K$ is uniformly bounded in $h^s$ and
uniformly Lipschitz in $h^1$ on these balls.
Constants may depend on $K,R,s$ and the initial widths.
\end{theorem}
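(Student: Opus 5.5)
The proof goes by induction on the step $q=1,\dots,K$, with a finite family of nested domains. Fix radii $R_q=R+\tfrac14+(K-q+1)/(4K)$, interpolating between $R+1/2$ and $R+1/4$. The induction hypothesis at step $q$ has three parts. First, $X^{[q-1]}_\rho$ is defined and smooth in $(z,\rho)$ on $B_{h^s}(0,R_{q})\times\{|\rho|\le\delta_K\}$. Second, it has the Taylor form \eqref{eq:step-taylor}, where $Z_r,E_{r,K},H_{q-1}$ are finite sums of monomials \eqref{eq:vector-field-monomial} of degree $2r+3$ (respectively $2q+1$) with bounded multipliers. Third, the remainder $\mathcal T_{q-1}$ obeys the $h^s$ bounds and the mixed $h^1$/$h^s$ derivative bounds listed after \eqref{eq:step-remainder-bounds}. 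The base case $q=1$ is $X^{[0]}=F$, and its bounds follow from Lemma~\ref{lem:majorant-calculus} with $A=1$.

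For the inductive step, split $H_{q-1}$ into $\mathcal R$, $\mathcal Q_{K,q}\mathcal N$ and $(I-\mathcal Q_{K,q})\mathcal N$ parts. The filtered part $\mathcal Q_{K,q}\mathcal N H_{q-1}$ has inputs $k_i\in B_{j_i}$ with $q+\sum j_i\le K$, so in particular no $P_K$ input. Lemma~\ref{prop:retained-gap} therefore gives $|\Omega|^{-1}\le D_K$, and $G_q=\mathcal S\mathcal Q_{K,q}\mathcal NH_{q-1}$ has multiplier bounded by $D_K$ times the old bound. One point needs care: the lemma is stated for the alternating monomial form with $k$ determined by the inputs. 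This is exactly the form \eqref{eq:vector-field-monomial}, and the internal-frequency cancellation established in Section~\ref{subsec:one-step} ensures it persists.

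Next, Lemma~\ref{lem:majorant-calculus} makes $G_q$, $\partial_tG_q$ and $DG_q$ polynomial maps bounded on $h^s$. They are also bounded on $h^1$ when one increment is in $h^1$, uniformly in $t$, because $|e^{-i\Omega t}|=1$. On the ball we get $\|\delta^qDG_q\|\le C\delta^q\le1/2$ for small $\delta$, so $A_q$ is inverted by a Neumann series on both $h^s$ and $h^1$. That series is smooth in $(z,\delta)$ with bounded derivatives. Then $\phi_q=\mathrm{id}+\delta^qG_q$ is a near-identity map. Injectivity follows from the bound $\|\delta^qG_q(z)-\delta^qG_q(z')\|_{h^s}\le\tfrac12\|z-z'\|_{h^s}$. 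For surjectivity onto $B_{h^s}(0,R_{q+1})$ we use the contraction $z=w-\delta^qG_q(z)$ on $B(0,R_q)$, valid once $C\delta^q<(4K)^{-1}$. The inverse function theorem then gives smoothness of $\phi_q^{-1}$ in $(z,\delta)$.

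Time regularity is handled separately. $G_q$ is $C^1$ in $t$ as a map into $h^s$, since $\partial_tG_q$ is again a bounded-multiplier polynomial; no extra $|\Omega|$ factor appears, because $\partial_t$ undoes $\mathcal S$. The field $X^{[q]}$ is then given by \eqref{eq:exact-pullback-recursion}, and \eqref{eq:state-shift-integral}--\eqref{eq:next-step-taylor} are exact algebraic identities. So only two things remain to check. The first is that $H_q=\mathcal U_q(\cdot;0)$ is again a finite sum of monomials of degree $2q+3$ with bounded multipliers. This holds because $\mathcal U_q(\cdot;0)=\mathcal T_{q-1}(\cdot;0)+D_zH_{q-1}[G_q]$ plus lower-index contributions; equivalently, it is the Taylor coefficient of order $q$ of the composed field. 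That coefficient is a polynomial built from substitutions of the $G_q$ into the $Z_r,E_r$ and from the derivatives $DG_q[Z_r+E_r]$, and both operations preserve the monomial form, degree counting and bounded multipliers. The second is that $\mathcal T_q$ inherits the derivative bounds, which follows by differentiating the integral formulas under Lemma~\ref{lem:majorant-calculus}. The $\delta$-derivative budget drops by one per step, matching the stated orders $K-q+1$ and $K-q$.

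After $K$ steps, $\Psi_{\delta,t}=\phi_1\circ\cdots\circ\phi_K$ maps $B(0,R+1/4)$ into $B(0,R+1/2)$. Its image contains $\overline B(0,R)$, its displacement is $O(\delta)$, and composing the individual bounds gives the bounds on derivatives of total order $K+1$. Equation \eqref{eq:partial-normal-form} is \eqref{eq:completed-step-field} at $q=K$ multiplied by $\delta$, with $R_K=\mathcal U_K$ bounded in $h^s$ and Lipschitz in $h^1$ via the derivative bounds and the mean value theorem. The layer condition \eqref{eq:flat-remainder-form} holds by construction: $E_{r,K}$ consists exactly of the blocks not in $\mathcal Q_{K,r+1}$, that is, those with $r+1+\sum j_i>K$, which is equivalent to $r+\sum j_i\ge K$. $Z_r$ is $t$-independent since it has zero phase.

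I expect the main obstacle to be the structural claim that every $H_q$ has the monomial form \eqref{eq:vector-field-monomial} with $t$-independent bounded multipliers. This is needed so that Lemma~\ref{prop:retained-gap} applies at the next step. It requires tracking that compositions with the inverse of $A_q$ only enter at orders above $q$ and that $\mathcal T_{q-1}(\cdot;0)$ is polynomial. I would handle it by proving a separate lemma: the formal Lie-series coefficients of the pulled-back field are polynomials generated from $F$ by substitution and derivation. This lemma would be proved by induction on the order.
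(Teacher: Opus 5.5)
Your plan follows the paper's proof in all essentials: induction over the $K$ steps on shrinking balls, the input filter with the filtered phase gap for the divisors, the convolution estimates and a Neumann series for $A_q^{-1}$, contractions for injectivity and coverage, and the layer condition read off from the complement of $\mathcal Q_{K,r+1}$. The structural lemma you defer is exactly the paper's induction hypothesis (iii). It carries the polynomial form of every Taylor coefficient $M_m^{[q]}$, $q\le m\le K-1$, not only of $H_q$, and updates them by $N_r=\sum_{m+(q+1)j=r}\frac1{j!}D_z^jM_m^{[q]}[G_{q+1},\dots,G_{q+1}]-\mathbf 1_{\{r=q\}}\partial_tG_{q+1}$ and $M_r^{[q+1]}=\sum_{\ell}(-DG_{q+1})^\ell N_{r-(q+1)\ell}$.

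Two slips need fixing. First, for $q\ge2$ the correct formula is $H_q=\mathcal T_{q-1}(\cdot;0)+D_z(Z_0+E_{0,K})[G_q]-DG_q[Z_0+E_{0,K}]$; the term $D_zH_{q-1}[G_q]$ you wrote has degree $4q+1$ and enters only at order $\delta^{2q-1}$, so your formula is right only for $q=1$. Second, coverage of $\overline B_{h^s}(0,R)$ should use that each fixed point of $z\mapsto w-\delta^qG_q(z)$ lies within $\sup|\delta|^q\|G_q\|_{h^s}$ of $w$. Your statement $\phi_q(B(0,R_q))\supset B(0,R_{q+1})$ places preimages in the larger ball, so it does not chain down to $U=B_{h^s}(0,R+1/4)$.
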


\begin{proof}
We argue by induction.

\emph{Induction hypotheses and initial step.}
For $0\le q\le K$, use the domains
$\mathcal O_q=B_{h^s}(0,R+1-q/(2K))$.
After $q<K$ steps we require the following properties on
$\mathcal O_q$ and a common interval of $\delta$ about zero.
At fixed $(t,z)$, denote the old field's $\delta$-Taylor coefficients by
\[
 M_m^{[q]}(t,z)
 :=\frac1{m!}\left.\partial_\delta^mX^{[q]}_\delta(t,z)
                          \right|_{\delta=0},\qquad 0\le m\le K-1.
\]
\begin{enumerate}[label=\textup{(\roman*)},leftmargin=2.4em]
\item The exact field $X^{[q]}_\delta$ has the form
\eqref{eq:next-step-taylor}, with $Z_r+E_{r,K}$ fixed for $r<q$.
Thus $M_m^{[q]}=Z_m+E_{m,K}$ for $m<q$, $M_q^{[q]}=H_q$,
and the coefficients with $m>q$ come from $\delta^{q+1}\mathcal T_q$.
\item The $\delta$ and state derivatives
$\partial_\delta^aD_z^bX^{[q]}_\delta$ for $a+b\le K+1$, and
$\partial_\delta^aD_z^b\mathcal T_q$ for $a+b\le K-q$, have
uniform $h^s$ bounds and the corresponding mixed bounds with one
state increment in $h^1$.  The fields are strongly continuous in $t$.
In particular, $\mathcal T_q$ is bounded in $h^s$ and uniformly
Lipschitz in $h^1$ on these balls.
\item The fixed coefficients with $m<q$ retain their Fourier structure.
For $q\le m\le K-1$, each $M_m^{[q]}$ is a finite sum of Fourier
convolutions of degree $2m+3$, with bounded multipliers and the
momentum and phase structure in \eqref{eq:vector-field-monomial}.
\end{enumerate}
The finite derivative ranges in \textup{(ii)} suffice for the
Taylor remainders and their mixed Lipschitz bounds used in
Lemma~\ref{lem:endpoint-transfer} and Section~\ref{sec:transfer}.
For $q=0$, $X^{[0]}_\delta=H_0=F$ and $\mathcal T_0=0$;
all three properties follow from the cubic convolution formula
and Lemma~\ref{lem:majorant-calculus}.

\emph{Passage from step $q$ to step $q+1$.}
The construction follows
$X^{[q]}_\delta\longrightarrow H_q\longrightarrow G_{q+1}
\longrightarrow X^{[q+1]}_\delta$.
By \textup{(iii)}, $H_q=M_q^{[q]}$ has degree $2q+3$.
Lemma~\ref{prop:retained-gap} bounds every phase reciprocal selected
by $\mathcal Q_{K,q+1}$.  Thus
\[
 G_{q+1}=\mathcal S\mathcal Q_{K,q+1}\mathcal NH_q,
 \qquad
 \partial_tG_{q+1}=\mathcal Q_{K,q+1}\mathcal NH_q
\]
have bounded multipliers.  The convolution estimates give their
required state-derivative bounds, including the mixed bounds;
the homological identity holds in $h^s$ and gives $G_{q+1}\in C^1_t$.
In particular, the polynomial structure supplies the needed
derivatives of $G_{q+1}$ independently of the finite derivative
range retained for $X^{[q]}_\delta$.

Decrease the interval of $\delta$ so that on $\mathcal O_{q+1}$,
\[
 |\delta|^{q+1}\|G_{q+1}\|_{h^s}<\frac1{4K},\qquad
 |\delta|^{q+1}\|DG_{q+1}\|_{\mathcal L(h^\nu,h^\nu)}<\frac12
 \quad(\nu=s,1).
\]
The shifted segment in \eqref{eq:state-shift-integral} then lies
in $\mathcal O_q$, and
\[
 A_{q+1}^{-1}
 =\sum_{\ell=0}^\infty(-\delta^{q+1}DG_{q+1})^\ell,
 \qquad
 \|A_{q+1}^{-1}\|_{\mathcal L(h^\nu,h^\nu)}\le2
 \quad(\nu=s,1).
\]
Consequently \eqref{eq:exact-pullback-recursion} defines the next
field.  Applying \eqref{eq:completed-step-field}--\eqref{eq:one-step-remainder}
with the step index $q+1$ gives the exact identity
\[
 X^{[q+1]}_\delta
 =\sum_{r=0}^{q}\delta^r(Z_r+E_{r,K})
       +\delta^{q+1}\mathcal U_{q+1},
\]
where $Z_q=\mathcal RH_q$ and
$E_{q,K}=(I-\mathcal Q_{K,q+1})\mathcal NH_q$.
Thus the coefficients of $\delta^r$ with $r<q$ are unchanged,
and the coefficient at order $q$ is now fixed as $Z_q+E_{q,K}$.
All these coefficients retain their Fourier structure.

We now verify regularity and uniform estimates for the new field
and remainder.  The exact recursion
\eqref{eq:exact-pullback-recursion} uses composition and a bounded
inverse, without differentiating the old field.  Standard finite-order
composition and inverse estimates therefore preserve its total
derivative order $K+1$ without a loss at successive steps.
The state integral \eqref{eq:state-shift-integral} uses one state
derivative of $X^{[q]}_\delta$, so $\mathcal D_{q+1}$ has bounds
through total order $K$.  Together with the $K-q$ orders available
for $\mathcal T_q$, formula~\eqref{eq:one-step-remainder} gives
$K-q$ orders for $\mathcal U_{q+1}$.
For the mixed estimate, differentiation of the state integral
puts the designated $h^1$ increment into
$(I+v\delta^{q+1}DG_{q+1})h$ or $DG_{q+1}[h]$, while
$G_{q+1}$ remains in $h^s$.
At higher orders the same increment occurs in only one branch of
each multilinear term.  On that branch use the inductive mixed
bounds for $X^{[q]}_\delta$, the convolution bounds for $G_{q+1}$,
and the $h^1$ bound for $A_{q+1}^{-1}$; estimate all other inputs
in $h^s$.
This proves the mixed bounds as well.  Strong time continuity is
preserved by these compositions and integrals.

It remains to verify the Fourier structure of the coefficients
at $\delta$ orders $q+1,\ldots,K-1$, which are needed in later steps.
The exact recursion \eqref{eq:exact-pullback-recursion} contains
\[
 X_\delta^{[q]}(t,z+\delta^{q+1}G_{q+1}).
\]
To organize its new $\delta$ coefficients, expand this composite term
in the state variable around $z$, and use the finite $\delta$-Taylor
expansion of the old field.  At $\delta$ order $r$, only $M_m^{[q]}$
with $m\le r$ can occur: inserting $j$ copies of $G_{q+1}$ gives
order $m+(q+1)j$.
Consequently, denoting the bracket coefficients by $N_r$, we obtain
for $0\le r\le K-1$
\[
 N_r=
 \sum_{\substack{m,j\ge0\\m+(q+1)j=r}}
 \frac1{j!}D_z^jM_m^{[q]}[G_{q+1},\ldots,G_{q+1}]
       -\mathbf1_{\{r=q\}}\partial_tG_{q+1}.
\]
The inverse series above contributes $(-DG_{q+1})^\ell$ at
$\delta$ order $(q+1)\ell$.  Consequently
\[
 M_r^{[q+1]}
 =\sum_{\ell=0}^{\lfloor r/(q+1)\rfloor}
       (-DG_{q+1})^\ell N_{r-(q+1)\ell}.
\]
Both sums are finite.  The uniform finite-order derivative bounds
give an $O_{h^s}(\delta^K)$ remainder when collecting coefficients
through order $K-1$; no infinite Taylor expansion is used.
For a term in $N_r$ with $m+(q+1)j=r$, its state degree is
\[
 (2m+3)-j+j(2q+3)=2r+3.
\]
The subtracted term has degree $2q+3$ at order $q$.
Each application of $DG_{q+1}$ increases the degree by $2(q+1)$,
so every term in $M_r^{[q+1]}$ has degree $2r+3$.

Momentum and phase are preserved by each slot substitution.
Indeed, replace the $a$th slot of an outer monomial, with index
$\kappa=k_a$ and sign $\sigma=\sigma_a$, by an inner monomial
with signs $\eta_j$ and momentum
$\kappa=\sum_j\eta_j\ell_j$.  The new signs are the unchanged
outer signs and $\sigma\eta_j$; their sum remains one.
The phase is
\[
 \begin{aligned}
 \Omega_{\rm out}+\sigma\Omega_{\rm in}
 &=\left(\sum_{i\ne a}\sigma_i\omega_{k_i}
             +\sigma\omega_\kappa-\omega_k\right)
   +\sigma\left(\sum_j\eta_j\omega_{\ell_j}-\omega_\kappa\right)\\
 &=\sum_{i\ne a}\sigma_i\omega_{k_i}
             +\sum_j\sigma\eta_j\omega_{\ell_j}-\omega_k.
 \end{aligned}
\]
Thus the internal frequency cancels for either sign, and the
new momentum relation follows by the same substitution.
At degree $2r+3$ the sign sum one gives $r+2$ unconjugated and
$r+1$ conjugated factors, which can be reordered as in
\eqref{eq:vector-field-monomial}.
The new multipliers are finite products of old multipliers or
their conjugates.  Internal indices are fixed by momentum, so
no additional free sum appears in these coefficients.
The earlier input restrictions also remain, preserving every
inherited phase-denominator bound.  This proves \textup{(iii)}
through order $K-1$.

If $q+1<K$, formula~\eqref{eq:next-coefficient-remainder} now
defines $H_{q+1}=\mathcal U_{q+1}(t,z;0)$ and $\mathcal T_{q+1}$,
and yields \eqref{eq:next-step-taylor} at the next step.
After extracting $H_{q+1}$, the integral formula for $\mathcal T_{q+1}$
uses one $\delta$ derivative of $\mathcal U_{q+1}$, leaving $K-q-1$
total derivative orders for this remainder and proving \textup{(ii)} in its required range.
Moreover, $H_{q+1}=M_{q+1}^{[q+1]}$ has exactly the structure
needed to construct $G_{q+2}$.  Hence all three induction properties
pass from $q$ to $q+1$.

\emph{Completion of the equation and composite transformation.}
At the last step set $R_K=\mathcal U_K$.
The completed-step identity, multiplied by the common factor
$\delta$ in the physical-time equation, gives
\eqref{eq:partial-normal-form}.
The remaining derivative order is at least one, so $R_K$ is
uniformly bounded in $h^s$ and uniformly Lipschitz in $h^1$.
Equivalently, Taylor's formula at fixed $(t,b)$ gives
\begin{equation}\label{eq:parameter-integral-remainder}
 R_K(t,b;\delta)=\frac1{(K-1)!}
 \int_0^1(1-v)^{K-1}
 \left.\partial_\rho^K X^{[K]}_\rho(t,b)
 \right|_{\rho=v\delta}\,\mathrm dv.
\end{equation}
The $K+1$ derivative orders for $X^{[K]}_\delta$ justify this
formula and its mixed Lipschitz bound.
The resonant $Z_r$ is independent of $t$.
A retained block fails $(r+1)+\sum_i j_i\le K$, which, for
integer indices, is precisely \eqref{eq:flat-remainder-form}.

There are only $K$ steps, so take the minimum of their thresholds for $\delta$.  For the coordinate maps, decrease it further so that
on the common ball $\mathcal O_K=B_{h^s}(0,R+1/2)$ the bounds
\[
 \begin{aligned}
 d_q&=\sup_{\substack{t\in\mathbb R,\ z\in\mathcal O_K\\|\delta|\le\delta_K}}
                  |\delta|^q\|G_q(t,z)\|_{h^s},\\
 L_q&=\max_{\nu=s,1}\ 
       \sup_{\substack{t\in\mathbb R,\ z\in\mathcal O_K\\|\delta|\le\delta_K}}
           |\delta|^q\|DG_q(t,z)\|_{\mathcal L(h^\nu,h^\nu)}
 \end{aligned}
\]
satisfy $\sum_{q=1}^Kd_q<1/8$ and
$\sum_{q=1}^K L_q<1/8$.
Integrating the derivative bound along the segment from $z'$ to $z$
shows that the correction $\delta^qG_q$ is $L_q$-Lipschitz on
$\mathcal O_K$.  The reverse triangle inequality then gives
\[
 \|\phi_q(t,z;\delta)-\phi_q(t,z';\delta)\|_{h^s}
       \ge(1-L_q)\|z-z'\|_{h^s}
\]
and hence injectivity on $\mathcal O_K$.
The forward composition from $U$ stays in $\mathcal O_K$, since
its total displacement is at most $\sum_qd_q$.

To prove image coverage, start from $a\in\overline B_{h^s}(0,R)$
and invert the steps in the order $1,\ldots,K$.
At step $q$ the preceding value $w$ satisfies
$\|w\|_{h^s}\le R+\sum_{j<q}d_j$.
The map $z\mapsto w-\delta^qG_q(t,z)$ is a contraction of
$\overline B_{h^s}(w,d_q)\subset\mathcal O_K$ into itself.
Its fixed point is the required preimage and moves by at most
$d_q$.  Hence the final inverse point belongs to $U$,
proving $\overline B_{h^s}(0,R)\subset\Psi_{\delta,t}(U)$.
The actual total displacement is $O(|\delta|)$ in either direction.
The bounded inverses $A_q^{-1}$ and the standard inverse function
and composition theorems give a smooth diffeomorphism, its
$C^1$ time dependence, and the $\delta$--state derivative bounds through
total order $K+1$, including the mixed bounds.
Finally, the chain-rule identities in Section~\ref{subsec:recursive-construction}
are reversible at each step.  They give exact equivalence of the
equations, with initial data
$b(0)=\Psi_{\delta,0}^{-1}(a(0))$.
\end{proof}

\subsection{Fourier support of the normal form}
To use \eqref{eq:partial-normal-form} for layer estimates, we need the
support of its resonant coefficients $Z_r$ and of the coordinate map
$\Psi_{\delta,t}$.  We relate the polynomial degree of these
coefficients to their support growth.
For the chosen regions set $\widetilde B_j^K=B_j$ for $j<K$ and
$\widetilde B_K^K=\mathbb Z^2$.  We say that a multilinear map has
filtration order $p$ if inputs supported in
$\widetilde B_{j_1}^K,\ldots,\widetilde B_{j_d}^K$ have output in
$\widetilde B_{\min\{K,p+\sum_i j_i\}}^K$.
When an input index equals $K$, this condition imposes no restriction.

The next observation is the coordinatewise resonance structure used
in \cite[Section~4]{StaffilaniWilson2020}.
\begin{lemma}[Coordinatewise cubic selection]\label{lem:cubic-selection}
Let $k_\ell=(m_\ell,n_\ell)$ and $k=(m,n)=k_1-k_2+k_3$.
For the cubic phase
\[
 \Omega=\omega_{k_1}-\omega_{k_2}+\omega_{k_3}-\omega_k,
\]
one has
\begin{equation}
 -\frac{\Omega(k_1,k_2,k_3,k)}8
 =(m_1-m_2)(m_3-m_2)
 +\pi^2(n_1-n_2)(n_3-n_2).
 \label{eq:cubic-phase-factorization}
\end{equation}
Consequently, $\Omega=0$ if and only if
\[
 (m_1-m_2)(m_3-m_2)=0,\qquad
 (n_1-n_2)(n_3-n_2)=0.
\]
Each output coordinate is therefore copied from one of the positive-sign
inputs $k_1,k_3$, possibly from different inputs in the two coordinates.
\end{lemma}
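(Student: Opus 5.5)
The plan is to expand the phase directly in coordinates and then use the irrationality of \(\pi^2\). Write \(k=(m,n)\) with \(m=m_1-m_2+m_3\) and \(n=n_1-n_2+n_3\). Since \(\omega_{m,n}=4m^2+4\pi^2n^2\) splits as a sum of a function of \(m\) and a function of \(n\), the phase splits the same way:
\[
 \Omega=4\bigl(m_1^2-m_2^2+m_3^2-m^2\bigr)
 +4\pi^2\bigl(n_1^2-n_2^2+n_3^2-n^2\bigr).
\]
It therefore suffices to prove the one-dimensional identity
\[
 m_1^2-m_2^2+m_3^2-(m_1-m_2+m_3)^2=-2(m_1-m_2)(m_3-m_2),
\]
and to apply it verbatim to the \(n\) coordinate.

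To check the identity, substitute \(x=m_1-m_2\) and \(y=m_3-m_2\), so that \(m_1=m_2+x\), \(m_3=m_2+y\) and \(m=m_2+x+y\). Expanding gives
\[
 (m_2+x)^2-m_2^2+(m_2+y)^2-(m_2+x+y)^2=-2xy .
\]
Multiplying by \(4\) in each coordinate yields \(\Omega=-8\bigl[(m_1-m_2)(m_3-m_2)+\pi^2(n_1-n_2)(n_3-n_2)\bigr]\), which is \eqref{eq:cubic-phase-factorization}.

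For the equivalence, the two products are integers \(A\) and \(B\), and \(\Omega=0\) means \(A+\pi^2B=0\). If \(B\ne0\), then \(\pi^2=-A/B\) would be rational, which is false. Hence \(B=0\), and then \(A=0\). The converse is immediate from the factorization. This is the only place where irrationality enters. For a general \(L\) one would use \(L^2\notin\mathbb Q\) in the same way, as remarked in Section~\ref{sec:main-results}.

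The copying statement follows directly. The condition \((m_1-m_2)(m_3-m_2)=0\) means \(m_2=m_1\) or \(m_2=m_3\). If \(m_2=m_1\), then \(m=m_3\); if \(m_2=m_3\), then \(m=m_1\). The same reasoning applies to \(n\), independently of the choice made for \(m\). So each output coordinate equals the corresponding coordinate of \(k_1\) or of \(k_3\). The argument is elementary, and the only substantive step is the irrationality argument.
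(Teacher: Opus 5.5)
Your proof is correct and follows essentially the same route as the paper: expand the squares coordinatewise to get the factorization, use irrationality of \(\pi^2\) to force both integer products to vanish, and read off the copying of each output coordinate from momentum. Your substitution \(x=m_1-m_2\), \(y=m_3-m_2\) simply writes out the expansion that the paper states in one line.
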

\begin{proof}
Expanding the squares in the definition of $\Omega$ gives
\eqref{eq:cubic-phase-factorization}.  The two products are integers,
so irrationality of $\pi^2$ forces both to vanish when $\Omega=0$;
the converse is immediate.  If $m_1=m_2$, momentum gives $m=m_3$;
if $m_3=m_2$, it gives $m=m_1$.  The second coordinate is identical.
\end{proof}

For a coefficient of degree $2q+1$, the momentum relation is
\begin{equation}
 k=k_1-k_2+k_3-\cdots-k_{2q}+k_{2q+1}.
 \label{eq:higher-momentum}
\end{equation}
By \eqref{eq:vector-field-monomial}, zero total phase means
\begin{equation}
 \omega_k+\sum_{\ell\ {\rm even}}\omega_{k_\ell}
 =\sum_{\ell\ {\rm odd}}\omega_{k_\ell}.
 \label{eq:higher-energy}
\end{equation}
Irrationality of $\pi^2$ separates this equality into a sums-of-squares
identity in each coordinate.

\begin{proposition}[Filtration bounds]\label{prop:filtration-bounds}
Suppose the first input coordinates satisfy
$|m_\ell|\le(2j_\ell+1)M_0$, $j_\ell\ge0$.
\begin{enumerate}
\item Every degree-$2q+1$ momentum monomial satisfies
\begin{equation}
 |m|\le\bigl(2(q+j_1+\cdots+j_{2q+1})+1\bigr)M_0.
 \label{eq:general-filter-bound}
\end{equation}
Thus a coordinate-map coefficient multiplying $\delta^q$ has
filtration order $q$.
\item If \eqref{eq:higher-energy} holds and $q\ge2$, then
\begin{equation}
 |m|\le\bigl(2(q-1+j_1+\cdots+j_{2q+1})+1\bigr)M_0.
 \label{eq:resonant-filter-bound}
\end{equation}
Thus the resonant coefficient of degree $2q+1$, which multiplies
$\delta^q$ in the physical-time equation, has filtration order $q-1$.
\item A resonant cubic coefficient has filtration order zero.
\end{enumerate}
The corresponding bounds for the second coordinate follow by replacing
$m_\ell,m,M_0$ with $n_\ell,n,N_0$.  Apply both coordinate bounds
for rectangles and only the bounded-coordinate estimate for either strip.
\end{proposition}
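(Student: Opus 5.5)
Parts 1 and 3 can be read off directly from the momentum relation and from Lemma~\ref{lem:cubic-selection}. Part 2 needs the energy identity \eqref{eq:higher-energy}, separated coordinatewise, together with an extremal argument.

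\emph{Part 1.} By \eqref{eq:higher-momentum} and the triangle inequality,
\[
|m|\le\sum_\ell|m_\ell|\le\sum_{\ell=1}^{2q+1}(2j_\ell+1)M_0=\bigl(2(q+\textstyle\sum_\ell j_\ell)+1\bigr)M_0 .
\]
This is \eqref{eq:general-filter-bound}. For the filtration statement, note first that a coordinate-map coefficient multiplying $\delta^q$ is built from $G_1,\dots,G_q$. Tracking how composition and inversion raise degrees, as in the proof of Theorem~\ref{prop:partial-normal-form}, its degree is $2q+1$. An input supported in $\widetilde B_{j_\ell}^K$ with $j_\ell<K$ satisfies the hypothesis. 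If some $j_\ell=K$, the target is $\widetilde B_K^K=\mathbb Z^2$ and there is nothing to prove. Otherwise the output lies in $B_{q+\sum j_\ell}$, truncated at $K$ by the convention $\widetilde B_K^K=\mathbb Z^2$. The $n$ coordinate is handled identically, and for strips only the bounded coordinate is relevant.

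\emph{Part 3.} Lemma~\ref{lem:cubic-selection} shows that when $\Omega=0$ the output $m$ equals $m_1$ or $m_3$. Hence $|m|\le\max(2j_1+1,2j_3+1)M_0\le(2\sum j_\ell+1)M_0$, which is filtration order zero.

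\emph{Part 2, the main step.} Because $\pi^2$ is irrational and all entries are integers, \eqref{eq:higher-energy} splits into the coordinate identity
\[
m^2+\sum_{\ell\text{ even}}m_\ell^2=\sum_{\ell\text{ odd}}m_\ell^2,
\]
and the momentum relation gives $m=\sum_{\ell\text{ odd}}m_\ell-\sum_{\ell\text{ even}}m_\ell$. Suppose, aiming for a contradiction, that $|m|>(2(q-1+\sum j_\ell)+1)M_0$. Each odd input is individually bounded by $|m_\ell|\le(2j_\ell+1)M_0$, so
\[
m^2\le\sum_{\ell\text{ odd}}m_\ell^2\le\sum_{\ell\text{ odd}}(2j_\ell+1)^2M_0^2 .
\]
For $q\ge2$ we have $q+1\ge3$ odd inputs, and I will compare this sum of squares with the square of a linear bound. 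Write $a_\ell=2j_\ell+1\ge1$. Then $\sum_{\text{odd}}a_\ell^2\le(\sum_{\text{odd}}a_\ell)^2-2\sum_{\ell<\ell'}a_\ell a_{\ell'}$. It would suffice to show
\[
\sum_{\text{odd}}a_\ell^2\le\Bigl(\textstyle\sum_{\text{odd}}a_\ell-2\Bigr)^2\quad\text{whenever }q+1\ge3 .
\]
The linear bound $\sum_{\text{odd}}a_\ell-2$ equals $2\sum_{\text{odd}}j_\ell+q-1$, which is at most $2(q-1+\sum j_\ell)+1$ when $q\ge2$. I expect this inequality to hold because the cross terms, each at least $1$, dominate $4\sum a_\ell-4$ once at least three terms are present. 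I expect this elementary check to be the main obstacle. The worry is the case where one $a_\ell$ is large and the others equal $1$. If the inequality fails there, I would fall back on using the even inputs and the momentum relation: cancellations either make $m$ smaller through momentum or make the even energy absorb it. As in Part 1, this bound translates into filtration order $q-1$ for the resonant coefficient $Z_{q-1}$ of degree $2q+1$, which multiplies $\delta^q$ in the physical-time equation.
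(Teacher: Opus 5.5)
Parts 1 and 3 are correct and follow the paper. Part 2 has a genuine gap. The inequality you propose to prove,
\[
 \sum_{\ell\ \mathrm{odd}}a_\ell^2\le\Bigl(\sum_{\ell\ \mathrm{odd}}a_\ell-2\Bigr)^2,
\]
is false for $q=2$, which is exactly the three-term case you flagged. Expanding, it is equivalent to $\sum_{\ell<\ell'}a_\ell a_{\ell'}\ge2\sum_\ell a_\ell-2$. For $(a_\ell)=(A,1,1)$ the left side is $2A+1$ and the right side is $2A+2$; even $(1,1,1)$ gives $3\le1$.

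The fallback through the even inputs cannot rescue it, because the bound you were aiming for is itself false. That bound is $|m|\le(2S+q-1)M_0$ with $S=\sum_{\ell\ \mathrm{odd}}j_\ell$. Example~\ref{prop:quintic-channel} is a counterexample: there $q=2$, all $j_\ell=0$, and $M_0=2$. The odd first coordinates are $-2,-2,1$ and the even ones are $0,0$. The first-coordinate energy identity from \eqref{eq:higher-energy} holds, since $9=4+4+1$, yet $|m|=3>2$.

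The repair is to keep the slack the statement gives you. Since $\sum_\ell j_\ell\ge S$, the target is $\bigl(2(q-1+\sum_\ell j_\ell)+1\bigr)M_0\ge(2S+2q-1)M_0$, not $(2S+q-1)M_0$. For this target no extremal analysis is needed:
\[
 |m|\le\Bigl(\sum_{\ell\ \mathrm{odd}}m_\ell^2\Bigr)^{1/2}\le\sum_{\ell\ \mathrm{odd}}|m_\ell|\le(2S+q+1)M_0\le(2S+2q-1)M_0\qquad(q\ge2).
\]
The paper reaches the same target slightly differently. It uses $\sum_{\ell\ \mathrm{odd}}j_\ell^2\le S^2$ and then checks $(2S+2q-1)^2-(4S^2+4S+q+1)=8S(q-1)+q(4q-5)\ge0$.
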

\begin{proof}
Momentum gives
$|m|\le\sum_\ell|m_\ell|
\le(2\sum_\ell j_\ell+2q+1)M_0$, proving the first estimate.

For a resonance, the first-coordinate energy identity is
\[
 m^2+\sum_{\ell\ {\rm even}}m_\ell^2
 =\sum_{\ell\ {\rm odd}}m_\ell^2.
\]
Put $S=\sum_{\ell\ {\rm odd}}j_\ell$.  Discarding the nonnegative
input squares on the left and substituting the input coordinate bounds gives
\[
 \begin{aligned}
 m^2
 &\le\sum_{\ell\ {\rm odd}}m_\ell^2
 \le M_0^2\sum_{\ell\ {\rm odd}}(2j_\ell+1)^2\\
 &=M_0^2\left(4\sum_{\ell\ {\rm odd}}j_\ell^2+4S+q+1\right)
 \le M_0^2(4S^2+4S+q+1),
 \end{aligned}
\]
where the last inequality uses
$\sum_{\ell\ {\rm odd}}j_\ell^2\le S^2$.
The assumption $q\ge2$ ensures
\[
 (2S+2q-1)^2-(4S^2+4S+q+1)
 =8S(q-1)+q(4q-5)\ge0.
\]
Taking square roots and using $S\le\sum_\ell j_\ell$, we obtain
$|m|\le(2S+2q-1)M_0
\le\bigl(2(q-1+\sum_\ell j_\ell)+1\bigr)M_0$,
which proves the second estimate.
For $q=1$, coordinatewise cubic selection gives
$|m|\le\max\{|m_1|,|m_3|\}
\le(2\sum_\ell j_\ell+1)M_0$, proving the third.
Replacing $m_\ell,m,M_0$ with $n_\ell,n,N_0$ proves the
second-coordinate bounds; no estimate is needed in the unbounded
direction of a strip.
\end{proof}

\begin{lemma}[Endpoint coefficients]\label{lem:endpoint-transfer}
For $1\le q<K$, the coefficient of $\delta^q$ in
$\Psi_{\delta,t}-I$ or $\Psi_{\delta,t}^{-1}-I$
is homogeneous of degree $2q+1$ and has filtration order $q$.
Both maps have exact Taylor remainders equal to $\delta^K$ times
uniformly bounded maps, with uniform mixed Lipschitz bounds on
inner domains.
\end{lemma}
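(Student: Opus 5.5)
We expand $\Psi_{\delta,t}=\phi_1\circ\cdots\circ\phi_K$ in powers of $\delta$ at fixed $(t,b)$. Each $\phi_q(z)=z+\delta^qG_q(t,z)$, and $G_q=\mathcal S\mathcal Q_{K,q}\mathcal NH_{q-1}$ is a homogeneous polynomial of degree $2q+1$ with bounded multipliers. The bound on the multipliers comes from Lemma~\ref{prop:retained-gap}.

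\emph{Forward map.} Compose from the inside out. Put $w_K=b$ and $w_{q-1}=w_q+\delta^qG_q(t,w_q)$. Substitute recursively and expand each $G_q(t,w_q)$ by its finite multilinear Taylor formula around $b$; this formula is exact because $G_q$ is a polynomial. A term using $G_{q_1},\dots,G_{q_p}$ nested in slots then carries the power $\delta^{q_1+\cdots+q_p}$. Its degree is $1+\sum_i 2q_i$, since each substitution of $G_{q_i}$ into a slot adds $2q_i$ factors. So the coefficient of $\delta^q$ is homogeneous of degree $2q+1$. The same slot-substitution computation as in the proof of Theorem~\ref{prop:partial-normal-form} shows that momentum and phase structure are preserved.

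\emph{Filtration order of the forward coefficients.} The key point is that filtration order is additive under slot substitution. Suppose an outer map of order $p_1$ receives, in one slot, the output of an inner map of order $p_2$ whose inputs have labels $j'_i$. That slot then has label at most $\min\{K,p_2+\sum j'_i\}$. Hence the output has label at most $\min\{K,p_1+p_2+\sum_{\text{all}}j\}$. Each $G_q$ has filtration order $q$: this follows from Proposition~\ref{prop:filtration-bounds}(1), because the filter $\mathcal Q_{K,q}$ only restricts inputs, and inputs of label $K$ are excluded by $q\ge1$. The coefficient of $\delta^q$ is a sum of nested compositions with $\sum q_i=q$, so it has filtration order $q$. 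One could also apply Proposition~\ref{prop:filtration-bounds}(1) directly to the degree-$(2q+1)$ momentum monomial. The only care needed is with inputs of label $K$. There $\widetilde B_K^K=\mathbb Z^2$ makes the condition vacuous whenever any label reaches $K$, thanks to the $\min\{K,\cdot\}$ convention, so the rectangle bound is needed only for inputs with labels $<K$. This gives the $\delta^q$ coefficients for $q<K$.

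\emph{Inverse map.} Solve $b=a-\sum_{q=1}^{K-1}\delta^q\Phi_q(a)+O(\delta^K)$ order by order from $a=\Psi_{\delta,t}(b)$. Equivalently, invert each $\phi_q$ by its Neumann/fixed-point series $z=w-\delta^qG_q(w)+\delta^{2q}DG_q(w)[G_q(w)]-\cdots$ and compose in the reverse order. Each inverse coefficient is again a finite sum of nested slot substitutions of the $G_q$'s with total $\delta$ power $q$. Degree count and filtration additivity therefore give homogeneity of degree $2q+1$ and filtration order $q$, exactly as for the forward map.

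\emph{Remainders.} For the remainders we use the smoothness of $\Psi_{\delta,t}^{\pm1}$ in $\delta$ and the uniform $\delta$--state derivative bounds through total order $K+1$ from Theorem~\ref{prop:partial-normal-form}, including the mixed $h^1$ bounds. Taylor's formula with integral remainder at order $K$ in $\delta$ at fixed $(t,b)$ then writes the remainder as $\delta^K$ times $\frac1{(K-1)!}\int_0^1(1-v)^{K-1}\partial_\rho^K\Psi^{\pm1}|_{\rho=v\delta}\,dv$. This map is bounded in $h^s$. It is Lipschitz in $h^1$ because one further state derivative is available with the increment in $h^1$.

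The Taylor coefficients produced this way must coincide with the polynomial ones computed above; this follows by uniqueness of Taylor coefficients. The main obstacle is the bookkeeping of filtration orders through nested substitutions when some intermediate output lands in the outer block $P_K$. The $\min\{K,\cdot\}$ convention, together with monotonicity $\widetilde B^K_j\subset\widetilde B^K_{j+1}$, must make the additivity argument valid there. I would check this first.
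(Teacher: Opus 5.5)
Your proposal is correct and follows the paper's own proof: the degree count $1+2\sum q_i$ through slot substitutions in the finite forward composition, filtration order from the momentum bound \eqref{eq:general-filter-bound}, order-by-order inversion of $\Psi_{\delta,t}(\Psi_{\delta,t}^{-1}(z))=z$ with each new coefficient entering through the identity, and the integral Taylor formula with the derivative bounds of Theorem~\ref{prop:partial-normal-form} for both remainders. The outer-block bookkeeping you flag is harmless. Your additivity computation with the $\min\{K,\cdot\}$ convention is already correct, and applying Proposition~\ref{prop:filtration-bounds}(1) directly to the degree-$(2q+1)$ momentum monomials avoids that bookkeeping altogether.
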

\begin{proof}
In the finite forward composition, every substitution adds twice its
$\delta$ order to the degree and preserves momentum.
Hence the coefficient of order $q$ has degree $2q+1$; its support
follows from \eqref{eq:general-filter-bound}.
For the inverse, differentiate
$\Psi_{\delta,t}(\Psi_{\delta,t}^{-1}(z))=z$
at $\delta=0$, with $t,z$ fixed.  At each order the new inverse
coefficient enters with identity coefficient, and all remaining
terms involve coefficients already determined.  Their finite
substitutions preserve degree, momentum, and bounded convolution
majorants.  The integral Taylor formula and the $\delta$--state derivative
bounds in Theorem~\ref{prop:partial-normal-form} give both exact
remainders and their mixed Lipschitz estimates.
\end{proof}

\section{Transfer to the exact solution}\label{sec:transfer}
We transfer the normal-form bounds to the exact solution on
$0\le t\le T\delta^{-1}$.
Starting from the leading resonant flow, we construct a finite
profile whose layer sizes, together with the filter condition, allow
the retained nonresonant contribution to be absorbed into the remainder
estimate.  An $h^1$ stability estimate then compares this profile with
the exact trajectory.

\subsection{The cubic resonant flow}
For the profile coefficients, let $\Phi_0(\sigma,\sigma_0;z)$ be the flow of
$\partial_\sigma z=Z_0(z)$, where $Z_0=\mathcal RF$.
The preservation of Fourier boxes by cubic resonances underlies
\cite[Sections~4--5]{StaffilaniWilson2020}.  We also need the
corresponding support property of the linearized flow.
\begin{lemma}[Principal flow and linearized support]\label{lem:principal-flow}
The flow $\Phi_0$ exists in $h^s$ on every bounded interval of $\sigma$.
Every row mass $\sum_n|z_{m,n}|^2$ and column mass
$\sum_m|z_{m,n}|^2$ is constant, and
$\|\Phi_0(\sigma,0;z)\|_{h^s}\le C_s\|z\|_{h^s}$.
If the initial value is supported in $B_0$, the trajectory stays in $B_0$.
Along such a trajectory, the linear variational propagator preserves
$P_{B_j}h^s$ for every $j$ and is bounded on both $h^s$ and $h^1$ on
each fixed interval of $\sigma$.  These conclusions hold for all three choices
of $B_j$ in \eqref{eq:box-bands}.
\end{lemma}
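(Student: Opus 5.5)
Our plan is to exploit the coordinatewise structure of cubic resonances from Lemma~\ref{lem:cubic-selection}. The resonant field is
\[
 (Z_0(z))_k=-i\lambda\sum_{k_1-k_2+k_3=k,\ \Omega=0}z_{k_1}\overline{z_{k_2}}z_{k_3}.
\]
By Lemma~\ref{lem:cubic-selection} each resonant quartet splits into an $m$-part and an $n$-part, each of which is a trivial coordinate coincidence. So we first reorganize $Z_0$ into explicit sums. Write $k=(m,n)$. The condition $(m_1-m_2)(m_3-m_2)=0$ means $m\in\{m_1,m_3\}$, with $m_2$ determined by momentum. Likewise $(n_1-n_2)(n_3-n_2)=0$ means $n\in\{n_1,n_3\}$. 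Expanding by inclusion–exclusion, $Z_0$ is a finite signed combination of three kinds of terms:
\begin{itemize}
 \item products of a row-mass or column-mass factor with $z_k$;
 \item "row" terms $z_{m,n_1}\overline{z_{m,n_2}}z_{m_3,n_3}$ and their transposed "column" analogues;
 \item diagonal terms $|z_{k'}|^2 z_k$ and $|z_k|^2z_k$ for the overlapping cases.
\end{itemize}
These are exactly the decompositions used in \cite[Section~4]{StaffilaniWilson2020}.

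The first step is to derive the conservation laws. Pairing $\partial_\sigma z$ with $\bar z$ on the row $\{m\}\times\mathbb Z$, and using that for a resonant quartet with $m=m_1$ (or $m_3$) the quartet reversed $(k,k_3,k_2)\mapsto\ldots$ stays resonant, we get that the contribution to $\frac{d}{d\sigma}\sum_n|z_{m,n}|^2$ is $2\operatorname{Re}$ of $-i\lambda$ times a real symmetric sum. Hence it vanishes. The cleanest route is the Hamiltonian one: $Z_0$ is the Hamiltonian vector field of
\[
 \mathcal H_0=\tfrac{\lambda}{2}\sum_{\text{resonant}}z_{k_1}\overline{z_{k_2}}z_{k_3}\overline{z_k},
\]
and the row mass $\sum_n|z_{m,n}|^2$ generates the phase rotation $z_{m',n}\mapsto e^{i\theta\mathbf 1_{m'=m}}z_{m',n}$. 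By Lemma~\ref{lem:cubic-selection}, every resonant quartet has equal numbers of $+$ and $-$ entries on row $m$: the multiset $\{m_1,m_3\}$ equals $\{m_2,m\}$. So $\mathcal H_0$ is invariant and the row mass Poisson-commutes with it. Columns are handled identically, as is the total mass.

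The second step is support preservation. If $z$ is supported in $B_0$, Lemma~\ref{lem:cubic-selection} places each output coordinate among the inputs' coordinates, so $Z_0(z)$ is supported in $B_0$. Thus $P_{B_0^c}z$ solves a linear equation with zero data and vanishes. More simply, the restricted finite- (or strip-) dimensional system is invariant. For strips only the bounded coordinate is copied, which suffices.

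The third step is global existence and the $h^s$ bound. Local existence in $h^s$ follows from Lemma~\ref{lem:majorant-calculus}, since $Z_0$ is a bounded cubic convolution. For the a priori bound we use the conservation laws. Row masses control $\sum_n|z_{m,n}|^2$ for each $m$, and column masses control each $n$. The key estimate is that $\frac{d}{d\sigma}\|z\|_{h^s}^2$ is controlled by $\|z\|_{h^s}^2$ times a quantity bounded by the conserved masses. In the row-type terms the output row $m$ equals an input row, and the remaining two factors pair in an inner product along a row or column. So the weight $\langle k\rangle^{2s}$ can be moved onto a factor sharing a coordinate with $k$, after which the terms are bounded by $\sup_m(\text{row mass})$, $\sup_n(\text{column mass})$, and $\|z\|_{\ell^2}$.

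This weight transfer is the step we expect to be the main obstacle. We plan to split $\langle k\rangle^{2s}\lesssim(1+m^2)^s+(1+n^2)^s$ and treat each coordinate separately. The $m$-weight is exactly conserved for terms with $m=m_1$, because the row structure makes those terms Hermitian in that weight. The only remaining cross terms use $m=m_3$ with $n=n_1$, where each output coordinate is an input coordinate. We then bound $|m|^{2s}\le|m_3|^{2s}$ and close the estimate with Cauchy–Schwarz and the $\ell^\infty$ row and column masses. Gronwall then gives $\|\Phi_0(\sigma,0;z)\|_{h^s}\le C\|z\|_{h^s}$ on bounded intervals. If a uniform constant in $\sigma$ fails, we accept an interval-dependent $C_s$; this suffices because the conclusion is stated for bounded intervals.

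The final step is the linearized flow. Its generator is $h\mapsto DZ_0(z)[h]$, which is a bounded linear convolution on $h^s$ and $h^1$ by Lemma~\ref{lem:majorant-calculus}, with $z$ bounded in $h^s$. Gronwall therefore gives boundedness on both spaces over bounded intervals. For invariance of $P_{B_j}h^s$, we note that $z$ is supported in $B_0\subset B_j$. The output coordinates of each linearized term copy coordinates from $z$-factors, which lie in $B_0$, or from $h$, which lies in $B_j$. In the bounded coordinates, therefore, the outputs stay within $(2j+1)M_0$ and $(2j+1)N_0$. So $DZ_0(z)$ maps $P_{B_j}h$ into $P_{B_j}h$, and uniqueness for the linear ODE yields invariance.
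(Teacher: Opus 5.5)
Your conservation argument (the Hamiltonian phrasing carries the same content as the paper's symmetrization via the multiset identity $\{m_1,m_3\}=\{m_2,m\}$), your support preservation, and your treatment of the linearized propagator all match the paper. The gap is in your third step, the $h^s$ bound.

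You already have both ingredients: every row mass $\rho_m$ and column mass $\kappa_n$ is exactly conserved, and $\langle k\rangle^{2s}$ splits into coordinate weights. But you never combine them. The quantity
\[
 N_s(z)=\sum_m(1+4m^2)^s\rho_m+\sum_n(1+4\pi^2n^2)^s\kappa_n
 =\sum_{k=(m,n)}\bigl[(1+4m^2)^s+(1+4\pi^2n^2)^s\bigr]|z_k|^2
\]
is constant along the flow, simply because each summand is. Moreover $2^{1-s}\langle k\rangle^{2s}\le(1+4m^2)^s+(1+4\pi^2n^2)^s\le2\langle k\rangle^{2s}$. Hence $\|\Phi_0(\sigma,0;z)\|_{h^s}^2\le2^{s-1}N_s(z)\le2^s\|z\|_{h^s}^2$ for all $\sigma$. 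This is the lemma's bound with a constant depending on $s$ alone, uniform in $\sigma$ and in $z$. Global continuation then follows from local existence, with no Gronwall argument and no weight transfer. This is exactly the paper's argument.

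Without this observation, your weight-transfer step does not give the stated bound, and as written it conflates two different computations.
\begin{itemize}
\item If you differentiate the coordinate-weighted quantity, its derivative vanishes identically. The terms with $m=m_3$ cancel exactly as those with $m=m_1$ do, by the swap $k_1\leftrightarrow k_3$, so there are no ``remaining cross terms'' to estimate.
\item If you differentiate $\|z\|_{h^s}^2$ itself, replacing the non-separable weight $\langle k\rangle^{2s}$ by the upper bound $(1+m^2)^s+(1+n^2)^s$ is only legitimate after taking absolute values, which discards the cancellation. Cauchy--Schwarz along rows and columns then gives at best $\frac{d}{d\sigma}\|z\|_{h^s}^2\le C_s\|z\|_{\ell^2}^2\|z\|_{h^s}^2$, i.e.\ a growth factor $e^{C_s\|z\|_{\ell^2}^2|\sigma|}$.
\end{itemize}
That factor depends on the time interval and on the mass of the data, so the resulting estimate is not the linear bound $\|\Phi_0(\sigma,0;z)\|_{h^s}\le C_s\|z\|_{h^s}$ asserted in the lemma. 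The phrase ``on every bounded interval'' in the statement concerns existence; it does not license an interval- or data-dependent constant. The weaker bound would suffice for the later use on $0\le\sigma\le T$ with data in a fixed ball, but it does not prove the lemma as stated.
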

\begin{proof}
For a resonant quartet $k_1-k_2+k_3-k_4=0$, coordinatewise momentum
and energy conservation give
\[
 \{(k_1)_\nu,(k_3)_\nu\}
 =\{(k_2)_\nu,(k_4)_\nu\},\qquad \nu=1,2.
\]
Fix a row $m$ and write $w(k)=\boldsymbol1_{\{k^{(1)}=m\}}$.
Differentiating its mass gives twice the real part of $-i\lambda$
times the quartic sum with weight $w(k_4)$.  Denote this sum by $S$.
Exchange of the negative slots, the multiset identity, and exchange
of the positive slots yield
\[
 S=\frac12\sum_{\rm res}(w(k_2)+w(k_4))
             z_{k_1}\bar z_{k_2}z_{k_3}\bar z_{k_4}
  =\frac12\sum_{\rm res}(w(k_1)+w(k_3))
             z_{k_1}\bar z_{k_2}z_{k_3}\bar z_{k_4}
  =\bar S.
\]
Thus the row mass is constant.  Column indicators give the same result.
The sums converge absolutely since $h^s\subset\ell^1$.  Summing these
nonnegative conserved masses with coordinate weights gives
\[
 \|z\|_{h^s}^2\asymp_s
 \sum_m(1+4m^2)^s\sum_n|z_{m,n}|^2
 +\sum_n(1+4\pi^2n^2)^s\sum_m|z_{m,n}|^2.
\]
This bound and local existence for the bounded cubic vector field give
global continuation in $h^s$.

Lemma~\ref{lem:cubic-selection} preserves $B_0$.  Along a trajectory
$z(\sigma)$ supported in $B_0$, each term of $DZ_0(z(\sigma))h$
has two inputs in $B_0$ and the remaining input lies
in $B_j$.  Each output coordinate is copied from a positive input, so
the output lies in $B_j$, including when the increment occupies a
conjugated slot.  Uniqueness for the linear equation proves support
invariance.  Its $h^s$ and $h^1$ bounds follow from
\eqref{eq:majorant-Hs}--\eqref{eq:majorant-H1} and Gronwall on the fixed
interval.  The argument uses only the bounded coordinates for a strip.
\end{proof}

\subsection{Finite graded expansion and stability}
Approximation by resonant amplitudes and stability comparison are used
in \cite[Section~2 and Appendix~A]{CarlesFaou2012}; finite modulation
expansions also underlie \cite{GaucklerLubich2010}.
We first construct a finite profile with the required layer structure,
then control its error relative to the exact solution of the final
normal-form equation throughout the target interval.  This yields the
Fourier energy strata of the exact solution.
\begin{lemma}[Transfer at a fixed depth]\label{lem:filtered-transfer}
Let $b$ solve \eqref{eq:partial-normal-form} on $0\le t\le T\delta^{-1}$,
with actual initial value
\[
 b(0)=\Psi_{\delta,0}^{-1}(\widehat u_0),\qquad
 \widehat u_0\in h^s.
\]
Assume either $\supp\widehat u_0\subset B_0$, for any of the three
support geometries, or the rectangular geometric-tail condition
\[
 \|P_{\Gamma_j}u_0\|_{H^1}^2\le E_*\eta^j\quad(j\ge0),
 \qquad E_*>0,\quad 0<\eta<1.
\]
In the first case set $y=0$;
in the second set $y=\sqrt\eta$.  Put
\[
 h=\max\{\delta,y\},\qquad E=\|P_K\widehat u_0\|_{h^1}.
\]
Assume that the normal-form construction uses a ball containing
$\widehat u_0$, the exact trajectory $b(t)$, and the leading trajectory
$\Phi_0(\sigma,0;P_{B_0}\widehat u_0)$, $0\le\sigma\le T$,
with a fixed positive boundary margin.
Then, for sufficiently small $h$,
\begin{equation}\label{eq:abstract-layer-bound}
 \begin{aligned}
 \sup_{0\le t\le T\delta^{-1}}\|P_jb(t)\|_{h^1}
 &\le C_K(h^j+E)\quad(1\le j<K),\\
 \sup_{0\le t\le T\delta^{-1}}\|P_Kb(t)\|_{h^1}
 &\le C_K(h^K+E).
 \end{aligned}
\end{equation}
The constants and the smallness threshold may depend on the fixed
construction bounds and, in the geometric-tail case, on $E_*$,
but are uniform in $\delta,y$ and $t$.
\end{lemma}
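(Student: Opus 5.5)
The plan is to compare $b$ with an explicit layered profile and to close a weighted $h^1$ stability estimate for the difference. Throughout I use the slow time $\sigma=\delta t\in[0,T]$, in which \eqref{eq:partial-normal-form} reads
\[
\partial_\sigma b=Z_0(b)+E_{0,K}(\sigma/\delta,b)+\sum_{r\ge1}\delta^{r}\bigl(Z_r+E_{r,K}\bigr)+\delta^K R_K.
\]
Layer sizes are measured by the weighted quantity
\[
|||z|||:=\sum_{j=0}^{K}h^{-j}\|P_jz\|_{h^1},
\]
with the convention that the $j=0$ term carries weight one. The target \eqref{eq:abstract-layer-bound} is then a bound $|||b(t)|||\le C_K(1+Eh^{-K})$, read layer by layer.

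\emph{Step 1 (initial data).} I expand $b(0)=\Psi^{-1}_{\delta,0}(\widehat u_0)$ by Lemma~\ref{lem:endpoint-transfer}. The coefficient of $\delta^q$ has filtration order $q$, so it maps $P_{j_1},\dots,P_{j_{2q+1}}$ inputs into $B_{q+\sum j_i}$. Combined with $\|P_j\widehat u_0\|_{h^1}\lesssim y^j$ (zero for $j\ge1$ in the support case), this puts the contribution of that coefficient in layer $j$ at size $\lesssim\delta^{q}y^{j-q}\le h^j$. The Taylor remainder is $O(\delta^K)$, and $P_K\widehat u_0$ contributes $E$. Hence $\|P_jb(0)\|_{h^1}\lesssim h^j+E$.

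\emph{Step 2 (profile).} I build $w=\sum_{p=0}^{K-1}w_p$ with $w_p$ supported in $B_p$ and of size $h^p$.
\begin{itemize}
\item Set $w_0(\sigma)=\Phi_0(\sigma,0;P_{B_0}b(0))$. By Lemma~\ref{lem:principal-flow} it stays in $B_0$ and is bounded in $h^s$.
\item For $p\ge1$, $w_p$ solves the variational equation along $w_0$, namely $\partial_\sigma w_p=DZ_0(w_0)w_p+f_p$. Here $f_p$ collects the terms of $Z_0$ and of $\delta^rZ_r$ ($r\ge1$) evaluated on lower profiles, whose total weight $r+\sum j_i$ equals $p$, together with the initial layer $P_pb(0)$.
\end{itemize}
The variational propagator preserves $P_{B_p}$ and is bounded in $h^1$ and $h^s$ on $[0,T]$ (Lemma~\ref{lem:principal-flow}). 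Proposition~\ref{prop:filtration-bounds} gives $Z_0$ filtration order $0$ and $Z_r$ filtration order $r$. By induction on $p$, $f_p$ is then supported in $B_p$ with size $\lesssim h^p$, so $w_p$ is supported in $B_p$ with $\|w_p\|_{h^1}\lesssim h^p$. The $\delta^r$ factors are absorbed because $\delta\le h$.

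\emph{Step 3 (residual).} The residual of $w$ in the slow-time equation splits into three parts.
\begin{itemize}
\item Resonant terms whose weight $r+\sum j_i$ is $\ge K$: these lie in weighted size $O(h^K)$.
\item The retained terms $E_{r,K}$: by \eqref{eq:flat-remainder-form}, every block has $r+\sum j_i\ge K$. On the profile this gives $\delta^r\prod h^{j_i}\lesssim h^K$ in every layer, with no use of the oscillation needed.
\item The remainder $\delta^KR_K$, which is $O(h^K)$.
\end{itemize}
Integrated over $\sigma\in[0,T]$, each piece contributes $O(h^K)$ to every layer. After weighting by $h^{-j}$ with $j\le K$, this is $O(1)$ in $|||\cdot|||$ for the layers where it matters.

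\emph{Step 4 (stability), which I expect to be the main obstacle.} Set $e=b-w$. Then $e(0)$ has layers $O(E)$, by the definition of the $w_p$. The difference satisfies
\[
\partial_\sigma e=DZ_0(w_0)e+\mathcal L(e)+\mathcal Q(e)+\mathrm{res},
\]
where $\mathcal L$ collects the linear-in-$e$ terms whose coefficients involve $w_p$ with $p\ge1$ or carry $\delta^r$ with $r\ge1$ (hence small, of order $h$, in weighted norm), and $\mathcal Q$ is at least quadratic in $e$.

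The difficulty is that a direct nonlinear Gronwall in $|||\cdot|||$ loses on the time scale $T/\delta$. My plan is to run Duhamel with the exact variational propagator along $w_0$, which preserves each $B_j$ and is bounded, and to bootstrap the hypothesis
\[
\|P_{\ge j}e\|_{h^1}\le C_*\bigl(h^{K}+E\bigr)h^{j-K}\quad\text{on }[0,\sigma_*].
\]
The terms in $\mathcal L$ and $\mathcal Q$ are estimated with Lemma~\ref{lem:majorant-calculus}, putting $e$ in the $h^1$ slot and all other inputs in the $h^s$ ball. The filtration bounds ensure that any output in layer $\ge j$ comes from inputs of total weight $\ge j$. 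The quadratic terms gain the extra smallness $h^K+E$ once $h$ is small, which lets me close with a linear Gronwall and continuity on $[0,T]$.

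Adding $w$ and $e$ then yields \eqref{eq:abstract-layer-bound}, with the outer block handled by the $j=K$ case. If the bootstrap fails to close because of the $E$ contributions in the weighted norm, my fallback is to treat $P_Ke$ separately with an unweighted Gronwall for $\|P_Ke\|_{h^1}$.
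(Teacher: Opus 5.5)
Your Steps~1--3 agree in substance with the paper. The gap is in Step~4, the step you flagged: your bootstrap hypothesis $\|P_{\ge j}e\|_{h^1}\le C_*(h^K+E)h^{j-K}$ is too weak to control the terms nonlinear in $e$. In both cases $E\lesssim h^K$, by \eqref{eq:orthogonal-initial-tail} for small $h$. So the hypothesis permits $\|P_{\ge j}e\|_{h^1}\approx C_*h^j$, and in particular $\|e\|_{h^1}=O(C_*)$.

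Lemma~\ref{lem:majorant-calculus} lets only one copy of $e$ be measured in $h^1$. The second copy in a quadratic term is bounded in $h^s$ only by the ball radius, or, for rectangular layers below $K$, by $C_KC_*h^{j_b}$. For example, take the slow-time term $\delta Z_1$ with inputs $P_{j_a}e,P_{j_b}e,w_0,w_0,w_0$ and $j_b\ge1$. Its output may reach layer $1+j_a+j_b$, and it is bounded by $C\delta\,C_*(h^K+E)h^{j_a-K}\|P_{j_b}e\|_{h^s}$. Compare this with the target $C_*(h^K+E)h^{1+j_a+j_b-K}$, with $\delta=h$ in the support case. You either lose $h^{-j_b}$ or get a coefficient proportional to $C_*$, which no bootstrap can absorb. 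So the claim that quadratic terms ``gain the extra smallness $h^K+E$'' is unjustified. Your fallback, an unweighted Gronwall for $P_Ke$ alone, also fails, because $P_Ke$ is fed by the lower layers of $e$.

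The missing observation is that no weighting is needed: the loss you anticipate does not occur in the unweighted norm. In slow time $\sigma\in[0,T]$, the whole field $Z_0+E_{0,K}(\sigma/\delta,\cdot)+\sum_{r\ge1}\delta^r(Z_r+E_{r,K})+\delta^KR_K$ is uniformly Lipschitz in $h^1$ on the $h^s$ ball containing $b$ and $w$. This follows from \eqref{eq:majorant-H1}, which is blind to the phases, and the $h^1$ Lipschitz bound for $R_K$ in Theorem~\ref{prop:partial-normal-form}. Plain Gronwall on an interval of length $T$ then gives $\sup_\sigma\|b-w\|_{h^1}\le e^{CT}(\|e(0)\|_{h^1}+CTh^K)\le C(h^K+E)$. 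This error already lies below every layer target, since $h^K\le h^j$. Together with $\|P_jw\|_{h^1}\lesssim h^j$ and $P_Kw=0$, it yields \eqref{eq:abstract-layer-bound}; this is exactly the paper's argument.

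On the initial data, the paper takes exact Taylor coefficients of $\Psi_{\delta,0}^{-1}$ as profile data and puts the remainder and the exterior into an $h^1$-small $e^0$. You use $P_pb(0)$ instead, so you must also note that these are $O(h^p)$ in $h^s$. That holds: rectangular layers below $K$ have bounded frequencies, and in the strip case $E=0$ with an $O_{h^s}(\delta^K)$ Taylor remainder.

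Two smaller corrections. By your own Step~1, $e(0)=P_Kb(0)$ is $O(h^K+E)$, not $O(E)$. Also, your reformulated target $|||b|||\le C(1+Eh^{-K})$ yields only $C(h^j+Eh^{j-K})$, which matches \eqref{eq:abstract-layer-bound} only because $E\lesssim h^K$.
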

\begin{proof}
We first prepare initial values with the required layer structure
for the profile coefficients.  Decompose the original initial value as
\[
 \widehat u_0=\sum_{j<K}y^jv_j+P_K\widehat u_0.
\]
If $\supp\widehat u_0\subset B_0$, take $y=0$,
$v_0=\widehat u_0$, and $v_j=0$ for $1\le j<K$.
Then $E=0$ and $\|v_0\|_{h^s}=\|\widehat u_0\|_{h^s}$.
For a rectangular geometric tail, take $y=\sqrt\eta$ and
$E=\|P_K\widehat u_0\|_{h^1}$, and define the finite-layer coefficients by
\[
 P_{\Gamma_j}\widehat u_0=y^jv_j,\qquad
 \supp v_j\subset\Gamma_j,\qquad
 \|v_j\|_{h^s}\le C_KE_*^{1/2}\quad(0\le j<K).
\]
The norm bound follows from the bounded frequencies in these
rectangular layers.  Orthogonality and
\eqref{eq:geometric-initial-tail} give
\begin{equation}\label{eq:orthogonal-initial-tail}
 E\le\frac{E_*^{1/2}y^K}{\sqrt{1-y^2}}.
\end{equation}
To obtain the corresponding coefficients of the transformed initial
value, use an auxiliary variable $\rho$ to count the combined powers
of $\delta$ and $y$.  In both cases, expand the following map at
$\rho=0$ through order $K-1$, with the ratios $\delta/h,y/h$ fixed:
\[
 \rho\longmapsto
 \Psi_{\rho\delta/h,0}^{-1}
 \left(\sum_{j<K}\rho^j(y/h)^jv_j\right),\qquad 0\le\rho\le h.
\]
The input remains in the construction ball: each Fourier layer is
multiplied by a factor between zero and one, so its $h^s$ norm is at
most $\|\widehat u_0\|_{h^s}$.
By Lemma~\ref{lem:endpoint-transfer}, an inverse coefficient of
order $r$ has filtration order $r$;
after substitution, a term of total order $\ell=r+\sum_i j_i$
is therefore supported in $B_\ell$.
The finite Taylor formula and the mixed Lipschitz bound for the omitted
exterior give
\[
 b(0)=\sum_{\ell=0}^{K-1}h^\ell b_\ell^0+e^0,
 \qquad b_0^0=P_{B_0}\widehat u_0,
\]
\[
 \supp b_\ell^0\subset B_\ell,\qquad
 \|b_\ell^0\|_{h^s}\le C_K,\qquad
 \|e^0\|_{h^1}\le C_K(E+h^K).
\]
These bounds are uniform in $\delta,y$, since $\delta/h,y/h\le1$.

To construct the profile coefficients, use the local variable
$\tau=\delta t\in[0,T]$ and prescribe $\beta_\ell(0)=b_\ell^0$.
First solve $\partial_\tau\beta_0=Z_0(\beta_0)$.
For $1\le\ell<K$, the coefficients already constructed determine
\[
 \mathcal F_\ell=
 [\rho^\ell]\sum_{r=0}^{\ell}\rho^r(\delta/h)^r
 Z_r\!\left(\sum_{m=0}^{\ell-1}\rho^m\beta_m\right).
\]
Here $[\rho^\ell]$ extracts a coefficient from a finite polynomial.
The equation for the current unknown coefficient is the inhomogeneous
linear equation
\[
 \partial_\tau\beta_\ell
 =DZ_0(\beta_0)\beta_\ell+\mathcal F_\ell.
\]
In particular, all higher resonant terms belong to the known forcing
$\mathcal F_\ell$.  Every forcing monomial satisfies
$r+\sum_i m_i=\ell$, so Proposition~\ref{prop:filtration-bounds}
places its support in $B_\ell$.
Let $U_0(\tau,\sigma)$ be the propagator of
$\partial_\tau w=DZ_0(\beta_0(\tau))w$.
Lemma~\ref{lem:principal-flow} applies to this linear propagation,
and Duhamel's formula gives
\[
 \beta_\ell(\tau)=U_0(\tau,0)b_\ell^0
 +\int_0^\tau U_0(\tau,\sigma)\mathcal F_\ell(\sigma)\,d\sigma.
\]
The support preservation and boundedness of $U_0$, together with the
convolution estimates for the known forcing, yield inductively
\[
 \supp\beta_\ell\subset B_\ell,
 \qquad \sup_{0\le\tau\le T}\|\beta_\ell\|_{h^s}\le C_K
 \quad(0\le\ell<K).
\]
Consequently, for small $h$ the finite profile
\[
 b^{<K}(t)=\sum_{\ell<K}h^\ell\beta_\ell(\delta t)
\]
stays in the construction ball.  To see the cancellation at each order,
substitute this profile into the resonant polynomials and use
$\delta^r=h^r(\delta/h)^r$.  Collecting powers of $h$ defines
the finite coefficients $\mathcal C_n$:
\[
 \begin{aligned}
 \sum_{r<K}\delta^{r+1}Z_r(b^{<K}(t))
 &=\delta\sum_{r<K}h^r(\delta/h)^r
 Z_r\!\left(\sum_{\ell<K}h^\ell\beta_\ell(\delta t)\right)\\
 &=\delta\sum_{n=0}^{J}h^n\mathcal C_n(\delta t),
 \end{aligned}
\]
where $J$ depends only on $K$.
The coefficient equations are precisely
$\partial_\tau\beta_\ell=\mathcal C_\ell$ for $0\le\ell<K$.
Differentiating the profile in physical time and subtracting therefore gives
\[
 \partial_t b^{<K}
 -\sum_{r<K}\delta^{r+1}Z_r(b^{<K})
 =-\delta\sum_{n=K}^{J}h^n\mathcal C_n(\delta t)
 =O_{h^s}(\delta h^K).
\]
The last estimate follows from the uniform bounds for the $\beta_\ell$,
the inequality $\delta/h\le1$, and the convolution estimates.

To estimate the terms omitted from that equation, observe that
\[
 \|P_jb^{<K}\|_{h^s}\le C_Kh^j\quad(0\le j<K),
 \qquad P_Kb^{<K}=0.
\]
Each block of $\delta^{r+1}E_{r,K}(t,b^{<K})$ is therefore bounded
by $C_K\delta h^{r+\sum_i j_i}$, or vanishes if an input is in the outer block.
Condition~\eqref{eq:flat-remainder-form} and the finitely many block
choices give $O_{h^s}(\delta h^K)$.  The term $\delta^{K+1}R_K$ has the same
bound.  Thus the profile has residual $O_{h^1}(\delta h^K)$ in the full exact
equation, uniformly for $0\le t\le T\delta^{-1}$.

The mixed bounds give an $h^1$ Lipschitz constant $C_K\delta$ for the
physical-time vector field on the ball containing the exact and approximate
trajectories.  Gronwall yields
\[
 \|b(t)-b^{<K}(t)\|_{h^1}
 \le e^{C_K\delta t}\bigl(\|e^0\|_{h^1}+C_K\delta t\,h^K\bigr),
\]
and hence, since $\delta t\le T$,
\[
 \sup_{0\le t\le T\delta^{-1}}\|b(t)-b^{<K}(t)\|_{h^1}
 \le C_K(E+h^K).
\]
For $j<K$, projection by $P_j$ removes all coefficients of order below
$j$; $P_K$ removes the entire profile.  This proves
\eqref{eq:abstract-layer-bound}.
\end{proof}

\subsection{Proof of the main theorem}
\begin{proof}[Proof of Theorem~\ref{cor:fixed-order}]
Throughout this proof, constants and thresholds may depend on the fixed
data of the theorem, but are independent of $\delta,\eta$.
Dependence on $E_*$ is indicated when relevant.
Set $K=N+1$.  Lemma~\ref{lem:principal-flow} and the exact-solution
envelope provide a ball containing the required exact and leading
resonant trajectories with a fixed margin.
Apply Theorem~\ref{prop:partial-normal-form} on a slightly larger ball
and choose a common $\delta_N^*>0$ for the three support geometries.
For $0<\delta\le\min\{\bar\delta,\delta_N^*\}$, the inverse
map defines the bounded exact orbit
$b(t)=\Psi_{\delta,t}^{-1}(a(t))$ on $[0,T\delta^{-1}]$.

If $\supp\widehat u_0\subset B_0$, Lemma~\ref{lem:filtered-transfer},
with $y=E=0$, supplies the profile and an $O_{h^1}(\delta^K)$ error.
Set $a_{\rm app}(t)=\Psi_{\delta,t}(b^{<K}(t))$.
Since $a(t)=\Psi_{\delta,t}(b(t))$, the mixed Lipschitz bound transfers
the error from $b$ to $a$.
Expanding $a_{\rm app}$ at fixed $t$, Lemma~\ref{lem:endpoint-transfer}
and the profile's input-layer structure place its coefficient of total
order $\ell$ in $B_\ell$, with an $O_{h^1}(\delta^K)$ remainder
uniform in $t$.
Finally multiplication by $e^{-i\omega_kt}$ preserves the norm and
every Fourier projection.  This proves parts~\textup{(i)} and~\textup{(ii)}, after decreasing
$\delta_N^*$ if necessary.

For part~\textup{(iii)}, take $y=\sqrt\eta$ and $h=\max\{\delta,y\}$
as in Lemma~\ref{lem:filtered-transfer}.
That lemma and the forward endpoint expansion apply for $h\le h_0$,
where $0<h_0\le1/2$ may depend on $E_*$ but not on $\delta,y$.
They give layer bounds $C_{N,E_*}(\delta^j+y^j+E)$ and an exterior
bound $C_{N,E_*}(\delta^K+y^K+E)$.
For $j<K$, \eqref{eq:orthogonal-initial-tail} and $y\le h_0$ give
$E\le C E_*^{1/2}y^j$.  Since $E_*>0$ is fixed, the permitted constant
also absorbs the factor needed to replace $y^j$ by $E_*^{1/2}y^j$.
The exterior bound follows with the explicit factor
$(1-y^2)^{-1/2}$.  These are \eqref{eq:main-geometric-layer} and
\eqref{eq:main-geometric-tail} in the small-$h$ range.

For $h>h_0$, use the common envelope directly.  Indeed,
\[
 \delta^j+E_*^{1/2}y^j
 \ge\min\{1,E_*^{1/2}\}h_0^j,
\]
and the same lower bound holds for the exterior right-hand side with
$j=K$, since $(1-y^2)^{-1/2}\ge1$.
Increasing $C_{N,E_*}$ therefore covers this range by $R_s$.
Only this constant, and not $\delta_N^*$, absorbs the
$E_*$-dependent profile threshold.  All bounds are uniform in
$0<\eta<1$ under the common envelope.  The construction and estimates
use only $|\lambda|=1$, so the constants can be chosen for both signs.
\end{proof}

\section{Scope of the result}
\label{sec:long-time}
Theorem~\ref{cor:fixed-order} concerns $t\le T\delta^{-1}$;
increasing the fixed layer depth does not extend this interval.
Longer times require new dynamical estimates, as the following
example and stability calculation show.

\subsection{A quintic resonant term outside the initial rectangle}
\label{sec:quintic-channel}
The cubic resonant field $Z_0$ preserves $B_0$.  The first change of
variables gives
\begin{equation}\label{eq:first-vector-normal-form}
 H_1=DF[G_1]-DG_1[C_0],\qquad Z_1=\mathcal RH_1.
\end{equation}
The following example shows that this quintic resonant field need not
preserve $B_0$.

\begin{example}[A nonzero component outside $B_0$]\label{prop:quintic-channel}
Take $K\ge2$ and $(M_0,N_0)=(2,1)$, and set
\[
 a=(-2,-1),\quad b=(1,1),\quad c=(0,-1),\quad d=(-3,1),
 \qquad z=e_a+e_b+e_c,
\]
where $e_k$ is the unit Fourier sequence at $k$.
Then $\supp z\subset B_0$, $d\notin B_0$, and $z_d=0$, but
\begin{equation}\label{eq:quintic-vector-component}
 (Z_1(z))_d=-\frac{i\pi^2}{2(1+4\pi^2)}\ne0.
\end{equation}
\end{example}

All three inputs of each generator monomial used here lie in $B_0$,
so the filter in \eqref{eq:first-generator} includes it in $G_1$.
Moreover, $E_{0,K}(t,z)=0$, and the resonant projection of
$DG_1[Z_0]$ vanishes.  Thus \eqref{eq:quintic-vector-component}
follows by evaluating $\mathcal R(DF[G_1])$ at the stated input;
Appendix~\ref{app:quintic-calculation} gives the complete calculation,
including the multiplicities and phase cancellation.

This example proves that $Z_1$ does not preserve $B_0$.
It does not establish order-one energy transfer over longer times,
or sharpness of the layer exponents or the time interval in
Theorem~\ref{cor:fixed-order}.
\subsection{Limits of the present stability argument}
The comparison in Lemma~\ref{lem:filtered-transfer} has a Lipschitz
rate $C\delta$ and hence a Gronwall factor $e^{C\delta t}$.
This factor is uniform for $t\le T\delta^{-1}$, but at
$t=T\delta^{-L}$, $L\ge2$, it becomes
$e^{CT\delta^{1-L}}$.  Increasing the normal-form depth reduces the
algebraic remainder but does not by itself control this growth.

\begin{remark}[Phase shear]\label{rem:phase-shear}
Even the cubic resonant flow has state derivatives that grow on these
longer intervals.  On the single-mode state $Ae_k$, its physical-time
flow is
\[
 A\longmapsto A\exp(-i\lambda\delta|A|^2t).
\]
At $A=r>0$, differentiation in the radial direction gives
\[
 \left|\frac{\mathrm d}{\mathrm dr}
       \bigl(r e^{-i\lambda\delta r^2t}\bigr)\right|
 =\sqrt{1+4r^4\delta^2t^2}.
\]
At $t=T\delta^{-L}$ this equals
$\sqrt{1+4r^4T^2\delta^{2(1-L)}}$, which is unbounded as
$\delta\downarrow0$.  Thus conservation of the Sobolev norm does not
give uniform state-derivative bounds for this flow.
\end{remark}

These computations limit the present stability argument, not the
possible duration of Fourier layering; the single-mode orbit itself
has no energy transfer.  Longer stability intervals have been proved
under other hypotheses, near a plane wave with generic mass
\cite{FaouGaucklerLubich2013}, and for most sufficiently small,
high-regularity data on Diophantine non-rectangular flat tori
\cite{BernierCamps2025}.  Neither result supplies the missing
longer-time estimates for the initial-data class considered here.

\section*{Acknowledgements}
The author thanks Christian Lubich for helpful discussions.
The author acknowledges financial support from the China Scholarship
Council (CSC).

\appendix
\Needspace{5\baselineskip}
\section{The defocusing Sobolev envelope}\label{app:envelope}
\begin{proof}[Proof of Lemma~\ref{lem:defocusing-envelope}]
We adapt the classical Br\'ezis--Gallou\"et proof
\cite{BrezisGallouet1980} to the torus and retain $\delta$
throughout, so that the final bound depends on $\delta t$.
Local existence and continuation in $H^s$, $s>2$, follow from the
unitary Schr\"odinger group and the locally Lipschitz cubic map
\cite{Cazenave2003}; see also \cite[Proposition~3.8 and Remark~3.10]{Tao2006}
for the periodic local theory and continuation criterion.
Mass and defocusing energy are conserved on the maximal interval:
\[
 M(u)=\|u\|_{L^2}^2,\qquad
 E_\delta(u)=\tfrac12\|\nabla u\|_{L^2}^2
                  +\tfrac\delta4\|u\|_{L^4}^4.
\]
For $0<\delta\le1$, positivity and the initial energy bound give
\[
 \|u^\delta(t)\|_{H^1}^2
 \le \|u_0\|_{H^1}^2+\tfrac12\|u_0\|_{L^4}^4
 \le M^2,
\]
where $M\ge1$ is independent of $\delta$.

The periodic logarithmic estimate corresponding to
\cite[Lemma~2]{BrezisGallouet1980} follows by splitting the Fourier
series at $|k|=R$, $R\ge2$.  Cauchy--Schwarz and the two-dimensional
lattice sums give
\[
 \|v\|_{L^\infty}
 \le C\|v\|_{H^1}\sqrt{\log(2+R)}
      +C_s\|v\|_{H^s}R^{1-s}.
\]
The two sums used here are
$\sum_{|k|\le R}\langle k\rangle^{-2}\le C\log(2+R)$ and
$\sum_{|k|>R}\langle k\rangle^{-2s}\le C_sR^{2-2s}$.
Choosing $R=2+(\|v\|_{H^s}/M)^{1/(s-1)}$ yields, for
$\|v\|_{H^1}\le M$,
\[
 \|v\|_{L^\infty}^2
 \le C_sM^2\log(e+\|v\|_{H^s}).
\]
This Fourier argument applies equally to noninteger $s$.

The Moser product estimate (the periodic version of
\cite[Lemma~A.8]{Tao2006}),
$\||v|^2v\|_{H^s}\le C_s\|v\|_{L^\infty}^2\|v\|_{H^s}$,
combined with the unitary group in the Duhamel formula, now gives
\[
 D^+Y(t)\le C_s\delta M^2Y(t)\log Y(t),
 \qquad Y(t)=e+\|u^\delta(t)\|_{H^s}.
\]
Here $D^+$ is the upper right Dini derivative; the same inequality
follows by smooth approximation.  Integrating the inequality gives
\[
 Y(t)\le
 \exp\!\left(\log Y(0)\,e^{C_sM^2\delta t}\right).
\]
This bound excludes finite-time loss of $H^s$ regularity and hence
gives global continuation.  For $\delta t\le T$ it is independent
of $0<\delta\le1$, proving Assumption~\ref{ass:Hs-envelope} with
$\bar\delta=1$.
\end{proof}

\Needspace{5\baselineskip}
\section{Convolution and filtered phase estimates}\label{app:convolution-phase}
\begin{proof}[Proof of Lemma~\ref{lem:majorant-calculus}]
First take finitely supported inputs and put
$f_i(k)=|(h_i)_k|$ if $\sigma_i=1$ and
$f_i(k)=|(h_i)_{-k}|$ if $\sigma_i=-1$.
Taking absolute values in the defining sum gives
\[
 |M_k(h_1,\ldots,h_d)|\le A(f_1*\cdots*f_d)(k).
\]
Reflection preserves all the sequence norms used here because
$\langle-k\rangle=\langle k\rangle$.
For $k=\sum_i\sigma_i k_i$ and $r\ge0$,
\[
 \langle k\rangle^r\le C_{d,r}\sum_{i=1}^d\langle k_i\rangle^r.
\]
Applying this inequality inside the convolution and using Young's
inequality, with the weighted factor in $\ell^2$ and all others in
$\ell^1$, gives
\[
 \|M(h_1,\ldots,h_d)\|_{h^r}
 \le C_{d,r}A\sum_{i=1}^d
       \|h_i\|_{h^r}\prod_{j\ne i}\|h_j\|_{\ell^1}.
\]
Cauchy--Schwarz yields
\[
 \|h\|_{\ell^1}
 \le\left(\sum_{k\in\mathbb Z^2}\langle k\rangle^{-2s}\right)^{1/2}
        \|h\|_{h^s}.
\]
The lattice sum is finite for $s>1$, so taking $r=s$ proves
\eqref{eq:majorant-Hs}.

For \eqref{eq:majorant-H1}, fix the input $\ell$ to be measured in
$h^1$ and again distribute the single output weight among the inputs.
When it falls on $\ell$, put that weighted input in $\ell^2$ and
every other input in $\ell^1$.  When it falls on $i\ne\ell$, put
the unweighted $\ell$th input in $\ell^2$ and the weighted $i$th
input in $\ell^1$.  The additional estimate needed in this case is
\[
 \sum_k\langle k\rangle|h_k|
 \le\left(\sum_k\langle k\rangle^{-2(s-1)}\right)^{1/2}
       \|h\|_{h^s}\le C_s\|h\|_{h^s},
\]
which holds in two dimensions because $s>2$.  In both cases Young's
inequality gives the right-hand side of \eqref{eq:majorant-H1}.
The bounds extend the sums by density and multilinearity, including
the case with one $h^1$ input.  The absolute-value convolution above
also shows absolute convergence in each Fourier component.

A restriction on the Fourier tuples multiplies $c$ by an indicator
bounded by one, so it preserves both estimates.  A real state
derivative of $M(z,\ldots,z)$ is a sum over its input slots: a
direction $h$ replaces $z$ in an unconjugated slot and $\bar h$
replaces $\bar z$ in a conjugated slot.  Repeated derivatives give
finitely many such terms, to which the same estimates apply.

Finally, consider substitution of another convolution $N$ into a
slot of $M$, as in $DM(z)[N(z)]$.  For each slot choice the momentum
relation for $N$ uniquely determines its output index from its inputs.
After eliminating this index, the expanded expression is a single
convolution in the remaining inputs, with multiplier the product of
the two original multipliers, or the conjugate of one of them.
There is therefore no additional free Fourier summation in its
multiplier.  Its absolute value is bounded by the product of the
original bounds.  Summing the finitely many slot choices proves the
assertion for substitutions, and iteration proves it for all finite
derivative contractions in the recursion.  The density argument
justifies the same identities for general $h^s$ inputs.
\end{proof}

\begin{proof}[Proof of Lemma~\ref{prop:retained-gap}]
Write $k_i=(m_i,n_i)$ and $k=(m,n)$.  In a bounded coordinate,
denote its input values by $r_i$ and its base half-width by $R_0$
($M_0$ for the first coordinate or $N_0$ for the second).
The assumptions $k_i\in B_{j_i}$ and $q+\sum_i j_i\le K$ give
\[
 \sum_{i=1}^{2q+1}|r_i|
 \le\bigl(2\sum_i j_i+2q+1\bigr)R_0
 \le(2K+1)R_0.
\]
Momentum then bounds the corresponding output coordinate by the
same number.  For rectangular regions both coordinates of all inputs
and of the output are bounded.  Only finitely many phases occur;
the minimum absolute value of their nonzero members is positive.

For a strip bounded in $n$, introduce the integers
\[
 A=\sum_{i=1}^{2q+1}(-1)^{i+1}m_i^2-m^2,
 \qquad
 B=\sum_{i=1}^{2q+1}(-1)^{i+1}n_i^2-n^2.
\]
Then $\Omega=4(A+\pi^2B)$, while the preceding coordinate bound gives
\[
 |B|\le\sum_i n_i^2+n^2
 \le2(2K+1)^2N_0^2=:Q_K^y.
\]
If $B=0$ and $\Omega\ne0$, then $|A|\ge1$.  If $B\ne0$,
$|A+\pi^2B|\ge\dist(\pi^2B,\mathbb Z)$.
Consequently every nonzero phase satisfies
\[
 |\Omega|\ge
 4\min\left\{1,\min_{\substack{B\in\mathbb Z\\1\le|B|\le Q_K^y}}
                         \dist(\pi^2B,\mathbb Z)\right\}>0.
\]
The minimum is over finitely many positive numbers because $\pi^2$
is irrational, even though $A$ is unbounded.

For a strip bounded in $m$, the same argument bounds
$|A|\le2(2K+1)^2M_0^2=:Q_K^x$.  Writing
$\Omega=4\pi^2(B+A/\pi^2)$ gives
\[
 |\Omega|\ge
 4\pi^2\min\left\{1,\min_{\substack{A\in\mathbb Z\\1\le|A|\le Q_K^x}}
                         \dist(A/\pi^2,\mathbb Z)\right\}>0.
\]
Taking the reciprocal of the appropriate lower bound proves the
lemma.  Each denominator inherited from an earlier step keeps its
earlier bound, since its multiplier retains the corresponding input
restriction.  At fixed depth only finitely many such factors occur
in each coefficient.  Their products are bounded without requiring
a lower bound for unrestricted nonzero phases.
\end{proof}

\section{Calculation of the quintic example}\label{app:quintic-calculation}
We compute the full component $(Z_1(z))_d$ for the state in
Example~\ref{prop:quintic-channel}, starting from
\eqref{eq:first-vector-normal-form}.
Every nonzero cubic input of $G_1(t,z)$ has layer indices $(0,0,0)$,
so it is included by the filter for $K\ge2$.
The complementary sum vanishes: $E_{0,K}(t,z)=0$, hence
$C_0(t,z)=Z_0(z)$.  In $DG_1[Z_0]$, the nonzero phase of a
monomial of $G_1$ is combined with a zero phase from $Z_0$ and
therefore remains nonzero.  Consequently
\[
 (Z_1(z))_d=\bigl(\mathcal R(DF[G_1])(z)\bigr)_d.
\]
Put $g=G_1(t,z)$.  Differentiating the three input factors of $F$ gives
\[
 (DF(t,z)[g])_d=-i\lambda
 \sum_{p-q+r=d}e^{-i\Omega(p,q,r,d)t}
 \bigl(g_p\overline{z_q}z_r
       +z_p\overline{g_q}z_r
       +z_p\overline{z_q}g_r\bigr).
\]
With three positive and two negative inputs in $\{a,b,c\}$,
the momentum equation at $d$ forces their signed multiplicities
to be $(2,1,-2)$.  Thus the only possible quintic monomial is
$z_a^2z_b\overline{z_c}^{\,2}$, whose value at the stated $z$ is one.
Its total phase vanishes because
$2\omega_a+\omega_b=2\omega_c+\omega_d=36+12\pi^2$.

In the two nonconjugated slots of the displayed derivative,
the relevant intermediate indices are $2a-c=(-4,-1)$ and
$a+b-c=(-1,1)$.  In its conjugated slot, they are
$2c-a=(2,-1)$ and $2c-b=(-1,-3)$.
Formula~\eqref{eq:first-generator}, evaluated at $z$, gives
\[
 \begin{aligned}
 g_{(-4,-1)}&=\frac{\lambda}{-32}e^{32it},
 &g_{(-1,1)}&=\frac{2\lambda}{16}e^{-16it},\\
 g_{(2,-1)}&=\frac{\lambda}{-32}e^{32it},
 &g_{(-1,-3)}&=\frac{\lambda}{-8(1+4\pi^2)}
                         e^{8i(1+4\pi^2)t}.
 \end{aligned}
\]
The factor two in $g_{(-1,1)}$ counts the two orders of $a,b$.
Substituting these four components into the derivative sum gives
\[
 \begin{aligned}
 (DF(t,z)[g])_d=-i\lambda\bigl(&
 2e^{-32it}g_{(-4,-1)}+2e^{16it}g_{(-1,1)}\\
 &+2e^{32it}\overline{g_{(2,-1)}}
 +e^{8i(1+4\pi^2)t}\overline{g_{(-1,-3)}}\bigr).
 \end{aligned}
\]
The first two factors of two count the two nonconjugated slots;
the third counts the two orders of the outer inputs $a,b$.
Every exponential cancels its partner.  Since $\lambda^2=1$,
the full resonant component is
\[
 (Z_1(z))_d=-i\left(
 \frac{2}{-32}+\frac{4}{16}+\frac{2}{-32}
 -\frac{1}{8(1+4\pi^2)}\right)
 =-\frac{i\pi^2}{2(1+4\pi^2)}.
\]

\end{document}